\newlength{\defbaselineskip}
\theoremstyle{plain}
\newtheorem{theorem}{Theorem}[section]
\theoremstyle{definition}
\newtheorem{conjecture}[theorem]{Conjecture}
\theoremstyle{plain}
\newtheorem{thm}{Theorem}
\newtheorem{lem}[thm]{Lemma}
\newtheorem{cor}[thm]{Corollary}
\newtheorem{conj}[thm]{Conjecture}
\theoremstyle{definition}
\newtheorem{defn}[thm]{Definition}
\newtheorem{exmp}[thm]{Example}
\newtheorem{rem}[thm]{Remark}
\numberwithin{equation}{section}
\newcommand{\bigboxs}[1]
{ \multiput(#1)(60,0){2}
 {\line(0,60){60}}
\multiput(#1)(0,60){2}
 {\line(60,0){60}}
}
\newcommand{\boxs}[1]
{ \multiput(#1)(20,0){2}
 {\line(0,20){20}}
\multiput(#1)(0,20){2}
 {\line(20,0){20}}
}
\begin{document}

\title[Proof of a positivity conjecture of M. Kontsevich]{Proof of a positivity conjecture of M. Kontsevich on non-commutative cluster variables}
\author{Kyungyong Lee and Ralf Schiffler}
\thanks{Research of K.L. is partially supported by NSF grant DMS 0901367. Research of R.S. is partially supported by NSF grant DMS 1001637. }

\address{Department of Mathematics, Wayne State University, Detroit, MI 48202}
\email{{\tt klee@math.wayne.edu}}
\address{Department of Mathematics, University of Connecticut, Storrs, CT 06269}
\email{{\tt schiffler@math.uconn.edu}}

%\begin{abstract} Let $r$ be any positive integer, and let  $x_1, x_2$ be indeterminates. We consider the sequence $\{x_n\}$ defined by the recursive relation $$
%x_{n+1} =(x_n^r +1)/{x_{n-1}}
%$$
%for any integer $n$.
%Finding a combinatorial expression for $x_n$ as a rational function of $x_1$ and $x_2$ has been an open problem since 2001.  We give a direct elementary formula for $x_n$ in terms of subpaths of a specific lattice path in the plane. The formula is manifestly positive, providing a new proof of a result by Nakajima and Qin.

%We give a direct elementary formula for the cluster variables in any skew symmetric rank 2 cluster algebra, in terms of subpaths of a specific lattice path in the plane. The formula is manifestly positive, providing a new proof of a result by Dupont.
%\end{abstract}

 \maketitle
 
% \textbf{2010 Mathematics Subject Classification :} 13F60, 16G20.

%\textbf{Keywords :} Laurent polynomials, Cluster algebras, Lattice paths.

%%%%%%%%%%%%%%%%%%%%%%%%%%%%%%%%%%%%%%%%%%%%%%%%%%
%%%%%%%%%%%%%%%%%%%%%%%%%%%%%%%%%%%%%%%%%%%%%%%%%%
%%%%%%%%%%%%%%%%%%%%%%%%%%%%%%%%%%%%%%%%%%%%%%%%%%
%%%%%%%%%%%%%%%%%%%%%%%%%%%%%%%%%%%%%%%%%%%%%%%%%%

\section{introduction}

Let $K = k(x,y)$ be the skew field of rational functions in the non-commutative
variables $x$ and $y$, where the ground field $k$ is $\mathbb{Q}$ or any field containing $\mathbb{Q}$, for example $\mathbb{Q}(q)$. For any positive integer $r$, let $F_r$ be the Kontsevich automorphism of $K$, which is defined by
$$F_r(\lambda)=\lambda,\text{ for all }\lambda\in k\text{ and}$$
\begin{equation}\label{Kont_map}F_r : \left\{\begin{array}{l}x \mapsto xyx^{-1} \\ y \mapsto (1+y^r)x^{-1}.\end{array}\right.\end{equation}
 
The main result of this paper is the proof of a special case of the following conjecture.
\begin{conjecture}[Kontsevich] \label{conjintro} For all positive integers $r_1, r_2$ and  for all $m\ge 0$, the expressions
 $$\left( F_{r_2}\circ F_{r_1}\right)^m (x) \qquad\textup{ and }\qquad\left( F_{r_2}\circ F_{r_1}\right)^m (y)$$ are non-commutative Laurent polynomials in $x$ and $y$ with non-negative integer coefficients. 
\end{conjecture}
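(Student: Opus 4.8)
The plan is to convert the dynamical statement into the positivity of a single recursively defined sequence, and then to prove that positivity by producing an explicit combinatorial formula in which non-negativity is visible monomial by monomial. First I would unwind the iteration. Recording the images of $x$ and $y$ under successive applications of $F_{r_1}$ and $F_{r_2}$, one obtains a bi-infinite family $(z_n)_{n\in\ZZ}\subset K$ with initial data $z_0,z_1\in\{x,y\}$ that satisfies a three-term non-commutative exchange relation among $z_{n-1},z_n,z_{n+1}$, in which the relevant exponent alternates between $r_1$ and $r_2$ with the parity of $n$, and in which the ordering of factors and the conjugations are prescribed by (\ref{Kont_map}). A direct calculation then matches $(F_{r_2}\circ F_{r_1})^m(x)$ and $(F_{r_2}\circ F_{r_1})^m(y)$ with particular members of $(z_n)$, so that Conjecture \ref{conjintro} becomes the statement that every $z_n$ is a non-commutative Laurent polynomial in $x,y$ with coefficients in $\ZZ_{\geq 0}$.

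That each $z_n$ is at least a non-commutative Laurent polynomial I would establish first, by induction on $\lvert n\rvert$ using the exchange relation together with a structural lemma showing that consecutive terms commute enough to make the required left- and right-divisions exact; this is the non-commutative Laurent phenomenon for this family. The heart of the argument is then an explicit formula
$$ z_n \;=\; \sum_{(\alpha,\beta)} w(\alpha,\beta), $$
where the sum runs over a finite combinatorial index set depending on $n$ and each $w(\alpha,\beta)$ is a single ordered monomial in $x^{\pm1},y^{\pm1}$ carrying coefficient $1$. The natural candidate for the index set is the collection of compatible pairs on a maximal Dyck path inside a rectangle whose side lengths are the exponents attached to the two neighbors $z_{n-1},z_{n+1}$ — a square in the symmetric case $r_1=r_2$, and a genuine rectangle when $r_1\neq r_2$ — but now with the crucial new feature that each compatible pair must also determine the order in which the letters $x$ and $y$ are written in $w(\alpha,\beta)$.

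To prove that this formula indeed computes $z_n$, I would verify that the right-hand sides satisfy the same exchange relation and initial conditions as $(z_n)$, whence equality follows by the recursion and non-negativity is automatic. In the commutative setting this reduces to a bijective identity among compatible pairs; here the aim is identical, but one must track the ordering data simultaneously. Thus the verification splits into two tasks run in parallel: a multiplicity count, identical in spirit to the commutative case, guaranteeing that the correct monomials occur and that their total coefficients are non-negative integers; and an ordering check, guaranteeing that when one forms the non-commutative products dictated by the exchange relation, the resulting ordered words are exactly those assigned by the combinatorial rule, with all spurious orderings cancelling.

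The main obstacle is precisely this ordering check. In the commutative proof, positivity follows once a certain involution on the "incompatible" configurations is shown to be sign-reversing and fixed-point-free; non-commutatively the analogous cancellations must respect the word order, so the combinatorial model has to be rigid enough to predict each monomial's ordering and yet flexible enough for the involution to act on top of that ordering. Constructing such an order-compatible, fixed-point-free involution uniformly in both $r_1$ and $r_2$ — including the asymmetric case $r_1\neq r_2$, where the underlying Dyck path lives in a non-square rectangle and the two variables enter the recursion unsymmetrically — is where I expect the real difficulty to lie, and it is the step that would have to be carried out through explicit, if intricate, combinatorics.
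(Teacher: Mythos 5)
Your overall architecture matches the paper's in outline: reduce both iterates to a single family, exhibit each member as a manifestly positive sum of Laurent monomials indexed by combinatorial data on a maximal Dyck path in a $(c_{n-1}-c_{n-2})\times c_{n-2}$ rectangle, prove the formula by compatibility with the Kontsevich automorphism, and get positivity for free since every summand has coefficient $+1$; the $y$-iterates and negative indices are handled by the same symmetry argument you invoke, and Laurentness in general is quoted from Berenstein--Retakh and Usnich. Note also that the paper proves the statement only for $r_1=r_2$ (Theorem \ref{mainthm}); since you explicitly defer the asymmetric case as the open difficulty, your scope coincides with the paper's actual result, and the paper's index set is not compatible pairs but sets of colored (blue/green/red) subpaths of $\mathcal{D}_n$ cut out by slope conditions, with a non-local admissibility condition on green subpaths.

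The step you single out as the crux, however --- an order-compatible, sign-reversing, fixed-point-free involution on ``incompatible'' configurations --- is a genuine gap, for two reasons. First, it mischaracterizes the mechanism: neither the commutative precursor \cite{LS} nor this paper performs any signed cancellation; every summand carries coefficient $+1$, so there is nothing to cancel, and your own remark that non-negativity is ``automatic'' is in tension with a plan in which spurious orderings must cancel (if they did, positivity would not be automatic). Second, your outline never isolates the difficulty that actually arises: applying $F$ to the step-$n$ sum does not reproduce the step-$(n+1)$ sum configuration by configuration. The natural image lands in the auxiliary family $\widetilde{\mathcal{F}}(\mathcal{D}_{n+1})$, in which the preceding-edge condition on $(m,w)$-green subpaths is dropped, and the discrepancy with $\mathcal{F}(\mathcal{D}_{n+1})$ consists of the explicitly identified \emph{positive} correction classes $\mathcal{T}^{\geq 3}(\mathcal{D}_{n+1})$. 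The paper's proof is precisely the management of these corrections: Lemma \ref{20110411lem1} identifies the new corrections created by one application of $F$, Lemma \ref{20110411lem2} shows $F$ carries level-$u$ corrections to level-$(u+1)$ corrections --- which requires writing them in the closed form $\sum_{w} C(C^{-1}x_{n-2})^{w-1}(C^{-1}x_{n-3})^{r-1}(C^{-1}x_{n-4})^{w-1}C^{-1}$ with $C=xyx^{-1}y^{-1}$ and using $F(C)=C$ --- and Lemma \ref{20110516eq2} makes the resulting telescoping terminate ($\mathcal{T}^{\geq n-1}(\mathcal{D}_n)=\emptyset$), all interleaved in the induction (\ref{globalinduction}). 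The non-commutative ordering is enforced not by an involution but by the edge-local factors $\beta_{[i]}$ multiplied in path order, together with the stretching map $f$ of Definition \ref{def_of_f}, which sends configurations on $\mathcal{D}_n$ to colored subpaths of $\mathcal{D}_{n+1}$ while preserving color. Without some substitute for this green-subpath/telescoping mechanism, your plan to ``verify that the right-hand sides satisfy the same exchange relation'' fails at the very first application of $F$, and the involution you propose to construct addresses a difficulty that does not occur in this framework.
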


We shall prove the conjecture in the case $r_1=r_2$ by providing an explicit combinatorial formula for these expressions as a sum over certain sets of lattice paths $\beta$, where each summand is a Laurent monomial given by the weight of the paths in $\beta$. As a direct consequence of this formula, we have the following.

\begin{theorem}
Conjecture \ref{conjintro} holds whenever $r_1=r_2$.
\end{theorem}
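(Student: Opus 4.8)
The plan is to package the iterates into a single recursively defined sequence, give that sequence an explicit combinatorial expansion, and read non-negativity off the expansion. Write $u_n = F_r^n(x)$ and $v_n = F_r^n(y)$, so that $(F_r\circ F_r)^m(x) = u_{2m}$ and $(F_r\circ F_r)^m(y) = v_{2m}$, and it suffices to treat every $u_n$ and $v_n$. Because each $F_r$ is a $k$-algebra automorphism, applying $F_r^n$ to the defining relations \eqref{Kont_map} gives
\begin{equation*}
u_{n+1} = u_n\, v_n\, u_n^{-1}, \qquad v_{n+1} = (1+v_n^r)\,u_n^{-1},\qquad u_0 = x,\ v_0 = y.
\end{equation*}
The first relation gives $v_n = u_n^{-1}u_{n+1}u_n$, and substituting it into the second collapses the sequence to the single clean exchange relation $u_n\, v_{n+1} = 1 + u_{n+1}^r$ (one checks, e.g., $u_0v_1 = x(1+y^r)x^{-1} = 1 + (xyx^{-1})^r = 1+u_1^r$). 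Thus the whole orbit is determined by the pair of polynomial identities $u_n v_n = u_{n+1}u_n$ and $u_n v_{n+1} = 1 + u_{n+1}^r$ together with the initial data.

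The core of the proof is a combinatorial formula. For each $n$ I would attach to the pair $(n,r)$ a lattice region (a rectangle whose side lengths are the degrees appearing in the commutative rank-two cluster variable $c_n$, governed by $c_{n-1}c_{n+1}=1+c_n^r$, the abelianization of the recursion with $c_0=x,\ c_1=y$) and a collection of sets of lattice paths $\beta$ inside it. To each such $\beta$ I would assign not merely an exponent vector but an \emph{ordered} Laurent monomial $w(\beta)$ in the non-commuting letters $x,y$, the word read off from the sequence of horizontal and vertical steps, and I would set $U_n = \sum_\beta w(\beta)$ and likewise $V_n$. Since each summand is a single monomial with coefficient $+1$, the expressions $U_n$ and $V_n$ are by construction non-commutative Laurent polynomials with non-negative integer coefficients, and abelianizing $w(\beta)$ recovers the known commutative lattice-path expansion of $c_n$, which both fixes the set of exponent vectors and serves as a consistency check.

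It then remains to prove $U_n = u_n$ and $V_n = v_n$, which I would do by induction on $n$, verifying that the $U_n,V_n$ satisfy the same recursion and initial conditions. Concretely one checks $U_0=x$, $V_0=y$ directly and establishes the two identities $U_nV_n = U_{n+1}U_n$ and $U_nV_{n+1} = 1+U_{n+1}^r$ at the level of paths: each is an equality between sums over pairs (respectively tuples) of path-sets, to be proved by a weight-preserving bijection that matches the left-hand configurations with a distinguished configuration producing the constant $1$ and with the $r$-fold stackings of configurations computing $U_{n+1}^r$.

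The main obstacle is exactly this bijection in the non-commutative setting. In the commutative case only the multiset of steps survives, so the recursion is an identity of exponent vectors and any step-preserving matching suffices; here the bijection must reproduce the entire ordered word $w(\beta)$, and the identities involve left multiplication, right multiplication, and the conjugation hidden in $v_n = u_n^{-1}u_{n+1}u_n$, so the monomials cannot be permuted at will. The crux is to choose the reading order on each $\beta$ so that (i) it abelianizes correctly and (ii) concatenating or splitting path-sets yields precisely the ordered product dictated by $u_nv_nu_n^{-1}$ and $(1+v_n^r)u_n^{-1}$, with the sandwiched factors $u_n^{-1}$ cancelling automatically. Once the reading order is arranged to force this local cancellation, the induction closes, and the non-negativity of the coefficients of $u_{2m}$ and $v_{2m}$ is immediate.
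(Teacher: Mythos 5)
Your algebraic setup is correct: with $u_n=F_r^n(x)$, $v_n=F_r^n(y)$, the identities $u_nv_n=u_{n+1}u_n$ and $u_nv_{n+1}=1+u_{n+1}^r$ do hold, and your overall strategy --- expand each iterate as a sum of non-commutative Laurent \emph{monomials} indexed by lattice-path configurations, so that non-negativity is automatic, then close an induction on $n$ --- is exactly the strategy of the paper (Theorem~\ref{mainthm}). But what you have written is a plan, not a proof, and everything you defer is where the mathematical content lies. You never define the configurations, the ordered weight $w(\beta)$, or the matching; you only stipulate that a reading order exists making ``concatenating or splitting path-sets yield precisely the ordered product.'' In the paper this is the eight-case weight table for $\beta_{[i]}$, the blue/green/red coloring of subpaths by slope conditions (Definition~\ref{alpha(i,k)}), and the conjugation frame $xyx^{-1}y^{-1}x\,(\prod_i\beta_{[i]})\,x^{-1}$; none of this is determined by the requirement of correct abelianization, since many orderings abelianize correctly yet fail the inductive step. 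You yourself name this bijection as ``the main obstacle,'' which is an accurate self-assessment: the obstacle is the theorem.

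More concretely, your expectation that a weight-preserving bijection closes the induction in one step is false for this kind of formula, and overcoming that is the paper's central technical work. Applying $F$ (equivalently, the exchange relations) term by term to the stage-$n$ sum does \emph{not} reproduce the stage-$(n+1)$ sum over $\mathcal{F}(\mathcal{D}_{n+1})$: it reproduces the sum over the larger, unconstrained set $\widetilde{\mathcal{F}}(\mathcal{D}_{n+1})$ \emph{minus} the surplus configurations in $\mathcal{T}^{\geq 3}(\mathcal{D}_{n+1})\setminus \mathcal{T}^{\geq 4}(\mathcal{D}_{n+1})$ (Lemma~\ref{20110411lem1}). These discrepancy terms cannot be cancelled at that stage; they must be pushed forward through further applications of $F$, which shifts their level, $\mathcal{T}^{\geq u}\mapsto\mathcal{T}^{\geq u+1}$ (Lemma~\ref{20110411lem2}), and they only disappear because $\mathcal{T}^{\geq n-1}(\mathcal{D}_n)=\emptyset$ (Lemma~\ref{20110516eq2}). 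The paper organizes this bookkeeping as the interleaved induction (\ref{globalinduction}) among Lemmas~\ref{20110411lem1}, \ref{20110411lem2} and \ref{20110411lem3}, and the otherwise mysterious admissibility condition on $(m,w)$-green subpaths in the definition of $\mathcal{F}(\mathcal{D}_n)$ exists precisely to encode the accumulated corrections. Nothing in your outline anticipates these phantom terms. Finally, note that the paper gives a path formula only for $x_n$, $n\ge 0$, and obtains $y_n$ and the negative-index cases from a symmetry argument quoted from the literature; by insisting on a parallel formula $V_n$ for the $y$-iterates you are taking on an additional burden that your outline also leaves unaddressed.
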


 Let us point out that, if the variables $x$ and $y$ were commutative variables, then the automorphism $F_r$ would describe precisely the exchange relations for the mutations in a skew-symmetric cluster algebra $\mathcal{A}_r$  of rank 2, and  our above mentioned formula is a non-commutative version of a formula for the cluster variables in $\mathcal{A}_r$ which we obtained earlier, see \cite{LS}.

In the special cases where $(r_1,r_2)=(2,2),(4,1),(1,4)$ the conjecture has been shown by DiFrancesco and Kedem in \cite{DK}. Moreover, it has been shown that the expressions in Conjecture \ref{conjintro} are Laurent polynomials for any choice of $(r_1,r_2)$ by Berenstein and Retakh \cite{BR} and earlier by Usnich \cite {U} in the case $r_1=r_2$.

\noindent \emph{Acknowledgements.} We are grateful to Philippe Di Francesco, Gr\'egoire Dupont, Sergey Fomin, David Hernandez, Rinat Kedem, Bernhard Keller, Maxim Kontsevich, Rob Lazarsfeld, Gregg Musiker, Vladimir Retakh, Dylan Rupel, Hugh Thomas, and Andrei Zelevinsky for their valuable advice, suggestions, comments, discussions and correspondence. 

\section{Main Result}

Fix a positive integer $r\geq 2$. 

\begin{defn}\label{cn}
Let $\{c_n\}$ be the sequence  defined by the recurrence relation $$c_n=rc_{n-1} -c_{n-2},$$ with the initial condition $c_1=0$, $c_2=1$. When $r=2$, $c_n=n-1$. When $r>2$, it is easy to see that 
$$\aligned
c_n&= \frac{1}{\sqrt{r^2-4}  }\left(\frac{r+\sqrt{r^2-4}}{2}\right)^{n-1} - \frac{1}{\sqrt{r^2-4}  }\left(\frac{r-\sqrt{r^2-4}}{2}\right)^{n-1}\\ &= \sum_{i\geq 0} (-1)^i { {n-2-i} \choose i }r^{n-2-2i}.
\endaligned$$ For example, for $r=3$, the sequence $c_n$ takes the following values:
\[0,1,3,8,21,55,144,...\]
\end{defn}

In order to state our theorem, we fix an integer $n\geq 4$. Consider a rectangle with vertices $(0,0),(0,c_{n-2}),(c_{n-1}-c_{n-2},c_{n-2})$ and $(c_{n-1}-c_{n-2},0)$. In what follows, by the diagonal we mean the line segment from $(0,0)$ to $(c_{n-1}-c_{n-2},c_{n-2})$. A Dyck path is a lattice path
from $(0, 0)$  to $(c_{n-1}-c_{n-2},c_{n-2})$ that proceeds by NORTH or EAST steps and
never goes above the diagonal. 

\begin{defn}
A Dyck path below the diagonal is said to be maximal if no subpath of any other Dyck path lies above it. The maximal Dyck path, denoted by $\mathcal{D}_n$, consists of $(w_0, \alpha_1,w_1,\cdots, \alpha_{c_{n-1}}, w_{c_{n-1}})$, where $w_0,\cdots,w_{c_{n-1}}$ are vertices and $\alpha_1,\cdots, \alpha_{c_{n-1}}$ are edges, such that $w_0=(0,0)$ is the south-west corner of the rectangle, $\alpha_i $ connects $w_{i-1}$ and $w_i$, and $w_{c_{n-1}}=(c_{n-1}-c_{n-2},c_{n-2})$ is the north-east corner of the rectangle.
\end{defn}

\begin{rem}
The word obtained from $\mathcal{D}_n$ by forgetting the vertices $w_i$ and replacing each horizontal edge by the letter $x$ and each vertical edge by the letter $y$ is (by definition) the Christoffel word of slope $c_{n-2}/(c_{n-1}-c_{n-2})$.
\end{rem}

\begin{exmp}Let $r=3$ and $n=5$. Then $\mathcal{D}_5$ is illustrated as follows.
$$\hspace{16pt} \begin{picture}(300,190)
\bigboxs{0,0}\bigboxs{60,0}\bigboxs{120,0}\bigboxs{180,0}\bigboxs{240,0}
\bigboxs{0,60}\bigboxs{60,60}\bigboxs{120,60}\bigboxs{180,60}\bigboxs{240,60}
\bigboxs{0,120}\bigboxs{60,120}\bigboxs{120,120}\bigboxs{180,120}\bigboxs{240,120}
\put(28,-12){$\tiny{\alpha_1}$}\put(80,-12){$\tiny{\alpha_2}$}
\put(201,68){$\tiny{\alpha_5}$}\put(156,68){$\tiny{\alpha_4}$}
\put(265,110){$\tiny{\alpha_7}$}
\put(103,32){$\tiny{\alpha_3}$}\put(223,85){$\tiny{\alpha_6}$}\put(283,146){$\tiny{\alpha_8}$}
\linethickness{1pt}\put(0,0){\line(5,3){300}}
\linethickness{5pt}\put(0,0){\line(1,0){120}}
\linethickness{5pt}\put(120,0){\line(0,1){60}}
\linethickness{5pt}\put(120,60){\line(1,0){120}}
\linethickness{5pt}\put(240,60){\line(0,1){60}}
\linethickness{5pt}\put(240,120){\line(1,0){60}}
\linethickness{5pt}\put(300,120){\line(0,1){60}}
\put(0,0){\circle*{9}}\put(120,60){\circle*{9}}\put(240,120){\circle*{9}}\put(300,180){\circle*{9}}\put(-18,0){$v_0$}\put(124,67){$v_1$}\put(228,129){$v_2$}\put(308,180){$v_3$}
\end{picture}$$
\end{exmp}

\bigskip
\begin{defn}
Let $v_i$ be the upper end point of the $i$-th vertical edge of $\mathcal{D}_n$. More precisely, let $i_1<\cdots<i_{c_{n-2}}$ be the sequence of integers such that $\alpha_{i_j}$ is vertical for any $1\leq j\leq c_{n-2}$. Define a sequence $v_0,v_1,\cdots,v_{c_{n-2}}$ of vertices by $v_0=(0,0)$ and $v_j=w_{i_j}$. 
\end{defn}

We introduce certain special subpaths called colored subpaths. These colored subpaths are defined by certain slope conditions as follows.

\begin{defn}
For any $i<j$, let $s_{i,j}$ be the slope of the line through $v_i$ and $v_j$. Let $s$ be the slope of the diagonal, that is, $s=s_{0,c_{n-2}}$. 
\end{defn}

\begin{defn}[Colored subpaths]\label{alpha(i,k)}
For any $0\leq i<k\leq c_{n-2}$, let $\alpha(i,k)$ be the subpath of $\mathcal{D}_n$ defined as follows (for illustrations see Example~\ref{mainexmp}).

\noindent (1) If $s_{i,t}\leq s$ for all $t$ such that $i<t\leq k$, then let $\alpha(i,k)$ be the subpath from $v_i$ to $v_k$. Each of these subpaths will be called a BLUE subpath. See Example~\ref{mainexmp}.

\noindent (2) If $s_{i,t}> s$ for some $i<t\leq k$, then

(2-a) if the smallest such $t$ is of the form $i+c_m-wc_{m-1}$ for some integers $3\le m\le n-1$ and $1\leq w< r-1$, then let $\alpha(i,k)$ be the subpath from $v_i$ to $v_k$. Each of these subpaths will be called a GREEN subpath. When $m$ and $w$ are specified, it will be said to be $(m,w)$-green.

(2-b) otherwise, let $\alpha(i,k)$ be the subpath from the immediate predecessor of $v_i$ to $v_k$. Each of these subpaths will be called a RED subpath.
\end{defn}

Note that every pair $(i,k)$ defines exactly one subpath $\alpha(i,k)$. We call these subpaths the \emph{colored subpaths} of $\mathcal{D}_n$. We denote the set of all these subpaths together with the single edges $\alpha_i$ by $\mathcal{P}(\mathcal{D}_n)$, that is,$$\mathcal{P}(\mathcal{D}_n)=\{\alpha(i,k)\,|\, 0\leq i<k\leq c_{n-2}\} \cup \{\alpha_1,\cdots,\alpha_{c_{n-1}} \}.$$

Now we define a set $\mathcal{F}(\mathcal{D}_n)$ of certain sequences of non-overlapping subpaths of $\mathcal{D}_n$. This set will parametrize the monomials in our expansion formula.

\begin{defn}
Let $$ \mathcal{F}(\mathcal{D}_n)=\left\{ \{\beta_1,\cdots,\beta_t\}\,\left|\aligned &\bullet \,t\geq 0,\ \beta_j\in \mathcal{P}(\mathcal{D}_n)\text{ for all }1\leq j\leq t,\,\\ &\bullet \text{ if }j\neq j'\text{ then }\beta_j\text{ and }\beta_{j'}\text{ have no common edge,}\\ &\bullet \text{ if }\beta_j=\alpha(i,k)\text{ and }\beta_{j'}=\alpha(i',k')\text{ then }i\neq k'\text{ and }i'\neq k,\\
&\bullet \text{ and if }\beta_j\text{ is }(m,w)\text{-green then at least one of the }(c_{m-1}-wc_{m-2})\\&\,\,\,\,\,\,\text{ preceding edges of }v_i\text{ is contained in some }\beta_{j'} \endaligned \right.\right\}. $$ 
\end{defn}

For each $\beta\in \mathcal{F}(\mathcal{D}_n)$, we say that $\alpha_i$ is \emph{supported on} $\beta$ if and only if $\alpha_i\in\beta$ or $\alpha_i$ is contained in some blue, green or red subpath $\beta_j\in\beta$. The \emph{support} of $\beta$, denoted by $\text{supp}(\beta)$, is defined to be the union of $\alpha_i$'s that are supported on $\beta$.

\begin{defn}
For each $\beta\in \mathcal{F}(\mathcal{D}_n)$ and each $i\in\{1,\cdots,c_{n-1}\}$, let
$$\beta_{[i]}=\left\{\begin{array}{ll}
x^{-1}y^{r}, &\text{ if }\alpha_i\text{ is not supported on }\beta\text{ and }\alpha_i\text{ is horizontal;}\\
&\\
x^{-1}y^{r-1}, &\text{ if }\alpha_i\text{ is not supported on }\beta\text{ and }\alpha_i\text{ is vertical;}\\ 
&\\
x^{-1}y^{0}, &\text{ if }\alpha_i\in\beta\text{ and }\alpha_i\text{ is horizontal;}\\
&\\
x^{-1}y^{-1}, &\text{ if }\alpha_i\in\beta\text{ and }\alpha_i\text{ is vertical;}\\
&\\
x^{0}y^{0},  &\text{ if }\alpha_i\text{ is horizontal and }\alpha_i\in\alpha(j,k)\in\beta\text{ for 
some }j,k;\\
&\\
x^{0}y^{-1}, &\text{ if }\alpha_i\text{ is vertical, }\alpha_{i-r+1}\text{ is horizontal, and }\alpha_i,\alpha_{i-r+1}\in\alpha(j,k)\in\beta\text{ for some }j,k;\\ 
&\\
x^{1}y^{-1}, &\text{ if }\alpha_i\text{ and }\alpha_{i-r+1}\text{ are  vertical, and }\alpha_i,\alpha_{i-r+1}\in\alpha(j,k)\in\beta\text{ for some }j,k;\\ 
&\\
x^{-1}y^{-1}, &\text{ if }\alpha_i\text{ is the first (vertical) edge of a red subpath }\alpha(j,k)\text{ in }\beta. 
\end{array}\right.$$
\end{defn}

Note that the last three cases exhaust all possibilities for $\alpha_i$ being a vertical edge contained in some $\alpha(j,k)$ in $\beta$, because if in addition $\alpha_{i-r+1}\notin \alpha(j,k)$ then $\alpha_i$ must be the first vertical edge of a red subpath.

%For any $\mathbf{\beta}=\{\beta_1,\cdots,\beta_t\}$, let $|\mathbf{\beta}|_2$ be the total number of edges in $\beta_1,\cdots,\beta_t$, and $|\mathbf{\beta}|_1=\sum_{j=1}^t |\beta_j|_1$, where
%$$
%|\beta_j|_1=\left\{\begin{array}{ll} 0, &\text{ if }\beta_j=\alpha_i\text{ for some }1\leq i\leq c_{n-1} \\k-i, &\text{ if }\beta_j=\alpha(i,k)\text{ for some }0\leq i<k\leq c_{n-2}.  \end{array}   \right.
%$$

%Let $K = k(x,y)$ be the skew field of rational functions in the non-commutative
%variables $x$ and $y$, where the ground field $k$ is $\mathbb{Q}$, $\mathbb{Q}(q)$ or any field containing either one. For any positive integer $r$, let $F_r$ be the Kontsevich automorphism of $K$, which is defined by
Recall from the introduction that the Kontsevich automorphism $F_r$ is given by 
\begin{equation}\label{Kont_map}F_r : \left\{\begin{array}{l}x \mapsto xyx^{-1} \\ y \mapsto (1+y^r)x^{-1}.\end{array}\right.\end{equation}
Let $F^{-1}_r$ be the inverse of $F_r$, namely,
\begin{equation}\label{Kont_inverse_map}F^{-1}_r : \left\{\begin{array}{l}x \mapsto (1+x^r)y^{-1} \\ y \mapsto yxy^{-1}.\end{array}\right.\end{equation}

Consider a sequence $\{r_n\}_{n\in \mathbb{Z}}$ of positive integers. For any positive integer $n$, let
$$x_n=(F_{r_n}\circ\cdots \circ F_{r_2} \circ F_{r_1})(x)=F_{r_n}(\cdots F_{r_2}(F_{r_1}(x))\cdots)\text{ and } y_n =(F_{r_n}\circ\cdots \circ F_{r_2} \circ F_{r_1})(y),$$
and let
$$x_{-n}=(F^{-1}_{r_{-n+1}}\circ\cdots \circ F^{-1}_{r_{-1}} \circ F^{-1}_{r_0})(x)\text{ and } y_{-n}=(F^{-1}_{r_{-n+1}}\circ\cdots \circ F^{-1}_{r_{-1}} \circ F^{-1}_{r_0})(y).$$
Let $x_0=x$ and $y_0=y$.

\begin{conj}[M. Kontsevich]\label{Kontconj}
Let $r_1$ and $r_2$ be arbitrary positive integers. Assume that $r_{2i+1}=r_1$ and $r_{2i}=r_2$ for every $i\in \mathbb{Z}$. Then, for any integer $n$, both $x_n$ and $y_n$ are non-commutative Laurent polynomials of $x$ and $y$ with non-negative integer coefficients.
\end{conj}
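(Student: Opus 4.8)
The plan is to extend the combinatorial machinery of this paper, which is tailored to a single fixed $r\geq 2$, to the genuinely two-parameter alternating situation of Conjecture~\ref{Kontconj}. The first step is to replace the sequence $\{c_n\}$ of Definition~\ref{cn} by a sequence governed by the \emph{alternating} recursion in which the multiplier switches between $r_1$ and $r_2$ according to the parity of the index, so that consecutive terms $(c_{n-1}-c_{n-2},\,c_{n-2})$ record the dimensions of a rectangle adapted to the mixed mutation $F_{r_2}\circ F_{r_1}$. One would then define a maximal Dyck path $\mathcal{D}_n$ below the diagonal of this rectangle exactly as before, and re-examine which Christoffel-type word it produces; the crucial change is that the self-similarity of $\mathcal{D}_n$ now alternates between two local shapes rather than being driven by a single value of $r$.

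The second step is to rebuild the colored subpaths. The blue condition, being a pure slope comparison against the diagonal, carries over verbatim. The green and red conditions, however, are tuned to the single-$r$ recursion: the green offset $i+c_m-wc_{m-1}$ and the admissible range $1\leq w<r-1$ both encode the multiplier $r$. I would replace $r$ here by whichever of $r_1,r_2$ governs the step at level $m$, and correspondingly split the green type into two families indexed by parity. The weight assignment $\beta_{[i]}$ must be revised in parallel: each occurrence of the exponent $r$ (for instance in the monomials $x^{-1}y^{r}$ and $x^{-1}y^{r-1}$ attached to unsupported edges, and the shift $\alpha_{i-r+1}$ governing the three vertical cases) should be read off from the parity of the position of $\alpha_i$, so that a vertical edge sitting in an ``$r_1$-block'' contributes with exponent $r_1$ and one in an ``$r_2$-block'' with exponent $r_2$.

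With these definitions in place, the third step is to prove the expansion formula $x_n=\sum_{\beta\in\mathcal{F}(\mathcal{D}_n)}\prod_i\beta_{[i]}$ (and the analogue for $y_n$) by induction on $|n|$, treating the positive direction via $F_{r_1},F_{r_2}$ and the negative direction via the inverses $F^{-1}_{r_1},F^{-1}_{r_2}$, which exhibit the same structure after interchanging the roles of $x$ and $y$. The inductive step would realize $\mathcal{D}_n$ as an alternating concatenation of rescaled copies of $\mathcal{D}_{n-1}$ and $\mathcal{D}_{n-2}$ and match the induced decomposition of $\mathcal{F}(\mathcal{D}_n)$ against the product structure forced by the non-commutative analogue of the rank-two exchange relation, keeping careful track of the left-to-right order of the letters $x$ and $y$. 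Since every summand is a single Laurent monomial with coefficient $+1$, such a formula yields the Conjecture immediately.

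The main obstacle will be precisely this inductive step under alternating parameters. In the equal case the single recursion $c_n=rc_{n-1}-c_{n-2}$ makes $\mathcal{D}_n$ self-similar with one building block, and the support and no-common-endpoint conditions defining $\mathcal{F}(\mathcal{D}_n)$ are stable under this self-similarity; when $r_1\neq r_2$ the path is built from two alternating blocks, and one must verify that the colored-subpath compatibility conditions---especially the green support requirement that at least one of the $(c_{m-1}-wc_{m-2})$ preceding edges of $v_i$ is contained in some $\beta_{j'}$---remain compatible with the two-periodic geometry and still reproduce the non-commutative monomial ordering produced by $F_{r_2}\circ F_{r_1}$. Establishing that this alternating combinatorics recovers the exchange relation, with no sign cancellation and the correct word order, is the heart of the argument; once it is in hand, positivity for arbitrary $r_1,r_2$ is a formal consequence.
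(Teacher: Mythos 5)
Your proposal does not prove the statement; it is a research program whose decisive step is left open, and that step is precisely where the paper's method uses $r_1=r_2$ in an essential way. Note first what the paper actually establishes: it proves the conjecture \emph{only} in the case $r_1=r_2$, by proving the explicit formula of Theorem~\ref{mainthm} (every summand is a single Laurent monomial with coefficient $1$, whence positivity of $x_n$ for $n\ge 0$) and then handling $y_n$ and negative indices by a symmetry argument cited from \cite{DK} and \cite{BR}. Restricted to equal parameters your plan simply reproduces this; the new content you claim is the general alternating case, and there the proposal stops at exactly the hard point. The inductive engine of the paper consists of Lemmas~\ref{20110411lem1} and~\ref{20110411lem2}, and both rest on single-$r$ arithmetic that does not survive a parity relabelling: the map $f$ of Definition~\ref{def_of_f} stretches intervals by the weights $b_{i,j}\in\{r,r-1\}$ and must carry maximal intervals to blue, green, or red subpaths; the proof of Lemma~\ref{20110411lem2} uses the self-similar decomposition of $[1,(rw-1)c_{n-3}]$ into $(w-1)$ copies of $\mathcal{D}_{n-1}$, $(r-1)$ copies of $\mathcal{D}_{n-2}$, and $(w-1)$ copies of $\mathcal{D}_{n-3}$, all built from the \emph{same} $r$, to obtain the telescoping identity $\sum_{w} C(C^{-1}x_{n-2})^{w-1}(C^{-1}x_{n-3})^{r-1}(C^{-1}x_{n-4})^{w-1}C^{-1}$; and the green condition $i+c_m-wc_{m-1}$ with $1\le w<r-1$, together with the shift $\alpha_{i-r+1}$ in the weight table, interlocks across all levels $3\le m\le n-1$ at once, not level by level. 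Saying that each occurrence of $r$ ``should be read off from the parity of the position'' is not a definition: in the alternating path, subpaths coming from different levels (hence different parities) interleave, so a single vertical edge does not carry a well-defined parity, and you give no candidate replacement for $f$, for the self-similar decomposition, or for the telescoping computation. You concede this yourself (``the main obstacle \ldots is the heart of the argument''), which confirms that the attempt is a plan, not a proof.

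Two further concrete defects. First, the paper fixes $r\ge 2$ at the start of Section 2, whereas the conjecture allows arbitrary positive integers; when some $r_i=1$, or when $r_1r_2\le 3$ so that the recursion is of finite or affine type, the sequence $c_n$ and the rectangle degenerate, the range $1\le w<r_i-1$ of green parameters is empty or negative, and your model needs a separate treatment you do not supply. Second, there is positive reason to doubt that the parity modification suffices even when made precise: in the commutative analogue with unequal exchange parameters, the known combinatorial models are structurally different from a reindexed version of the colored subpaths of \cite{LS}, not a cosmetic variant of them, so your step two would likely fail at the analogue of Lemma~\ref{20110411lem1} rather than merely requiring bookkeeping. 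In short: you correctly locate where the difficulty sits, but you prove neither the full conjecture nor anything beyond what the paper already establishes for $r_1=r_2$.
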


We are now ready to state our main result. For monomials $A_i$ in $K$, we let $\prod_{i=1}^m A_i$ denote the non-commutative product $A_1A_2\cdots A_m$.

\begin{thm}\label{mainthm}
If $r_n=r$ for all $n$ then for $n\geq 4$,
\begin{equation}\label{maineq1}x_{n-1}=\sum_{\mathbf{\beta}\in\mathcal{F}(\mathcal{D}_n)} xyx^{-1}y^{-1}x\left(\prod_{i=1}^{c_{n-1}}\beta_{[i]}\right)x^{-1}. \end{equation}
%and
%\begin{equation}\label{maineq2}x_{3-n}=x_2^{-c_{n-1}} x_1^{-c_{n-2}}\sum_{\mathbf{\beta}\in\mathcal{F}(\mathcal{D}_n)}x_2^{r|\mathbf{\beta}|_1}x_1^{r(c_{n-1}-|\mathbf{\beta}|_2)}.\end{equation}
\end{thm}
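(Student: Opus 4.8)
The natural strategy is induction on $n$, using the recursive structure of the Kontsevich map. The key observation is that $x_{n-1}$ and $x_n$ are related by a single application of $F_r$, so the combinatorial formula \eqref{maineq1} must be compatible with this single-step dynamics. Concretely, the map $F_r$ (and its inverse) expresses $x_{n}$ in terms of $x_{n-1}$ and $x_{n-2}$ via a rank-2 exchange-type relation; when translated into Laurent polynomials, this becomes a relation of the form $x_{n-1}x_{n-3}^{\,?}=\text{polynomial}(x_{n-2})$, and the plan is to show that the proposed expansion satisfies exactly this relation. Since the commutative specialization of \eqref{maineq1} is known to compute the cluster variables of $\mathcal{A}_r$ from our earlier work \cite{LS}, the formula is already verified ``on the level of exponents summed together''; what is genuinely new and must be controlled here is the noncommutative \emph{ordering} of the factors $\beta_{[i]}$, encoded by the product $\prod_{i=1}^{c_{n-1}}\beta_{[i]}$ taken in the order of the edges $\alpha_1,\dots,\alpha_{c_{n-1}}$ along the maximal Dyck path $\mathcal{D}_n$.

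First I would establish the base cases $n=4$ (and possibly $n=5$) by direct computation, expanding $x_3$ via \eqref{Kont_map} and checking it against the sum over $\mathcal{F}(\mathcal{D}_4)$; this also serves to calibrate the prefactor $xyx^{-1}y^{-1}x$ and the trailing $x^{-1}$. Next, the core of the argument is a bijective/recursive decomposition of $\mathcal{F}(\mathcal{D}_n)$ that mirrors the passage from $\mathcal{D}_{n-1}$ (and $\mathcal{D}_{n-2}$) to $\mathcal{D}_n$. Because $c_n = rc_{n-1}-c_{n-2}$, the path $\mathcal{D}_n$ decomposes into blocks governed by the smaller paths, and the colored subpaths $\alpha(i,k)$ — blue, green, and red — are precisely designed so that the slope conditions $s_{i,t}\le s$ versus $s_{i,t}>s$ track how subpaths of $\mathcal{D}_n$ sit relative to the diagonal as the rectangle is rescaled. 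The plan is to partition each $\beta\in\mathcal{F}(\mathcal{D}_n)$ according to the behavior of its subpaths near a distinguished ``cut'' position in $\mathcal{D}_n$, and to show that summing the weights $\prod\beta_{[i]}$ over each part reproduces, after the noncommutative substitution $x\mapsto xyx^{-1},\ y\mapsto(1+y^r)x^{-1}$, the corresponding piece of $F_r(x_{n-2})$ or the product structure of the recursion.

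The main technical device will be to verify that the eight cases in the definition of $\beta_{[i]}$ are exactly the local weights produced by one application of $F_r$: an unsupported horizontal edge contributes $x^{-1}y^r$ (matching the $1+y^r$ numerator in the image of $y$), an edge $\alpha_i\in\beta$ contributes the ``mutated'' factor, and edges internal to a colored subpath $\alpha(j,k)$ contribute the telescoping factors $x^0y^0$, $x^0y^{-1}$, $x^1y^{-1}$ that collapse when adjacent monomials are multiplied in the prescribed noncommutative order. The condition $i\ne k'$, $i'\ne k$ forbidding shared endpoints, together with the green ``preceding-edge'' condition, should correspond exactly to the admissibility constraints that prevent overcounting and guarantee the $1$ in $1+y^r$ is picked up at most once per available slot. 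I expect the \emph{main obstacle} to be the bookkeeping of noncommutative cancellation: showing that when two factors $\beta_{[i]}\beta_{[i+1]}$ are multiplied, the $x$'s and $y$'s cancel in precisely the pattern demanded by $F_r$, and that this local cancellation is consistent globally along the whole path regardless of where colored subpaths begin and end. In the commutative case the exponents simply add and the result is forced by \cite{LS}; here one must prove that the specific left-to-right ordering dictated by $\mathcal{D}_n$ is the unique ordering making the telescoping work, which will require a careful edge-by-edge induction along the path (an ``inner'' induction nested inside the induction on $n$).
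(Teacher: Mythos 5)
Your outline gets the frame right (induction on $n$ via $x_n=F(x_{n-1})$, base case by hand, matching the eight local weights $\beta_{[i]}$ against the substitution $x\mapsto xyx^{-1}$, $y\mapsto(1+y^r)x^{-1}$), but it is missing the one idea that makes the induction close, and the step you present as the heart of the argument is false as stated. You assume that a single application of $F$ carries the sum over $\mathcal{F}(\mathcal{D}_n)$ term-by-term onto the sum over $\mathcal{F}(\mathcal{D}_{n+1})$, with the green preceding-edge condition serving as exactly ``the admissibility constraint that prevents overcounting'' in this one step. That is not what happens. The family that transforms well under one application of $F$ is the \emph{relaxed} family $\widetilde{\mathcal{F}}(\mathcal{D}_{n+1})$, in which the green condition is dropped altogether: the paper introduces $z_{n-1}=\sum_{\beta\in\widetilde{\mathcal{F}}(\mathcal{D}_n)}Cx\left(\prod_{i}\beta_{[i]}\right)x^{-1}$ with $C=xyx^{-1}y^{-1}$, and proves in Lemma~\ref{20110411lem1} that $F(z_{n-1})$ equals $z_n$ \emph{minus} an explicit correction term, the sum over $\mathcal{T}^{\geq 3}(\mathcal{D}_{n+1})\setminus\mathcal{T}^{\geq 4}(\mathcal{D}_{n+1})$. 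The reason $\mathcal{F}$ itself cannot be matched in one step is that the green condition is non-local --- whether a green subpath $\beta_j$ is admissible depends on whether \emph{other} components $\beta_{j'}$ cover its preceding edges --- so it breaks the fixed-support product structure on which the matching under $F$ (via the blow-up map $f$ of Definition~\ref{def_of_f}) depends; and it is multi-scale: the parameter $m$ in ``$(m,w)$-green'' runs over all $3\le m\le n-1$, and a level-$m$ violation records a discrepancy created $n-m$ applications of $F$ earlier, which no one-step bijection can detect.

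What repairs this is precisely the machinery your proposal lacks: the filtration $\mathcal{T}^{\geq u}(\mathcal{D}_n)$ of green violators by level; the shift lemma (Lemma~\ref{20110411lem2}), which says that $F$ maps the level-$u$ correction sum for $\mathcal{D}_n$ to the level-$(u+1)$ correction sum for $\mathcal{D}_{n+1}$; and the termination statement (Lemma~\ref{20110516eq2}) that $\mathcal{T}^{\geq n-1}(\mathcal{D}_n)=\emptyset$. With these, the corrections accumulated over all previous steps telescope: $x_{n-1}=z_{n-1}-\sum_{m=5}^{n}F^{n-m}(\text{corrections at level }m)$ collapses to $z_{n-1}$ minus the sum over all of $\mathcal{T}^{\geq 3}(\mathcal{D}_n)$, which is the sum over $\mathcal{F}(\mathcal{D}_n)$ by (\ref{20110516eq}). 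Note also that the shift lemma is not self-contained: its proof rewrites the correction sums as $\sum_{w=1}^{r-2}C(C^{-1}x_{n-2})^{w-1}(C^{-1}x_{n-3})^{r-1}(C^{-1}x_{n-4})^{w-1}C^{-1}$, invoking the theorem for smaller indices, so the paper runs an interleaved induction (\ref{globalinduction}) between Lemma~\ref{20110411lem2} and Lemma~\ref{20110411lem3} rather than the single nested induction you describe; and the exchange-type relation ``$x_{n-1}x_{n-3}^{\,?}=\text{polynomial}(x_{n-2})$'' you float at the start is never used. Finally, the obstacle you single out as the main one --- noncommutative cancellation --- is comparatively routine here: every summand is conjugation-corrected by $C$, which satisfies $F(C)=C$, and the local weight-matching is a direct adaptation of the commutative computation of \cite{LS}; the genuinely hard part is the relaxation-plus-correction scheme above, which is exactly what your proposal leaves out.
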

\begin{cor} Conjecture~\ref{Kontconj} holds in the case $r_1=r_2$.
\end{cor}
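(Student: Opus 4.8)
The plan is to extract positivity of the $x_n$ directly from Theorem~\ref{mainthm}, then transport it to the $y_n$ and to negative indices using a flip symmetry of $F_r$, with the transport to the $y_n$ being the genuine difficulty.

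\emph{Positivity of $x_n$, $n\ge 0$.} In the constant case $r_n=r$ we have $x_n=F_r^{\,n}(x)$ and $y_n=F_r^{\,n}(y)$ for all $n\in\ZZ$. For $n\ge 4$, in the right-hand side of \eqref{maineq1} the outer factors $xyx^{-1}y^{-1}x$ and $x^{-1}$ are independent of $\beta$, while each $\beta_{[i]}$ is by definition a single Laurent monomial $x^{a}y^{b}$; hence every summand is one Laurent monomial with coefficient $+1$. As $\mathcal F(\mathcal D_n)$ is finite, collecting like monomials presents $x_{n-1}$ as a non-commutative Laurent polynomial whose coefficients count the $\beta$ producing each monomial, so they are non-negative integers. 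This yields positivity of $x_m$ for every $m\ge 3$; the cases $x_0=x$, $x_1=xyx^{-1}$, $x_2=xyx^{-1}y^{-1}x^{-1}+xyx^{-1}y^{r-1}x^{-1}$ are positive by inspection. Hence $x_m$ is positive for all $m\ge 0$.

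\emph{Reduction by the flip symmetry.} Let $\sigma$ be the automorphism of $K$ with $\sigma(x)=y$, $\sigma(y)=x$. Comparing \eqref{Kont_inverse_map} with the definition of $F_r$ gives $\sigma\circ F_r\circ\sigma=F_r^{-1}$, hence $\sigma\circ F_r^{\,n}\circ\sigma=F_r^{-n}$ and
\[
x_{-n}=\sigma(y_n),\qquad y_{-n}=\sigma(x_n)\qquad(n\in\ZZ).
\]
Since $\sigma$ preserves the class of Laurent polynomials with non-negative integer coefficients, the whole statement is equivalent to positivity of the single family $\{x_m:m\in\ZZ\}$, and in particular $y_m$ is positive for every $m\le 0$. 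It therefore remains to prove positivity of $y_m$ for $m\ge 1$ (equivalently of $x_m$ for $m\le -1$).

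\emph{The $y_n$ and the main obstacle.} Because the maps $F_r^{\,k}$ all commute, $x_{n+1}=F_r^{\,n}(xyx^{-1})=x_n y_n x_n^{-1}$, so
\[
y_n=x_n^{-1}\,x_{n+1}\,x_n .
\]
For $n=0,1,2$ this is positive by direct computation (e.g.\ $y_1=(1+y^r)x^{-1}$). For $n\ge 3$, writing $x_m=xyx^{-1}y^{-1}x\,\widehat Q_m\,x^{-1}$ with $\widehat Q_m=\sum_{\beta}\prod_i\beta_{[i]}$ a positive Laurent polynomial, the outer factors telescope to
\[
y_n=x\,\widehat Q_n^{-1}\widehat Q_{n+1}\;yx^{-1}y^{-1}x\;\widehat Q_n\,x^{-1}.
\]
All factors other than $\widehat Q_n^{-1}\widehat Q_{n+1}$ have non-negative coefficients, and products of non-negative-coefficient elements of $K$ stay non-negative (monomials of the free group on $x,y$ can only merge, never cancel with a sign); so the entire problem collapses to showing that $\widehat Q_n^{-1}\widehat Q_{n+1}$ is again a Laurent polynomial with non-negative coefficients. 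This is the main obstacle: it is the non-commutative shadow of the cluster exchange relation, where a polynomial division must return a positive answer, and it is not formal---it cannot be deduced from the first step alone. I would resolve it either by producing an explicit positive right factorisation $\widehat Q_{n+1}=\widehat Q_n R_n$ from a comparison of the maximal Dyck paths $\mathcal D_{n+1}$ and $\mathcal D_n$, or by establishing a companion lattice-path formula for $y_n$ paralleling \eqref{maineq1}. Combined with the first two steps and the flip symmetry, this gives positivity of all $x_n$ and $y_n$, i.e.\ Conjecture~\ref{Kontconj} for $r_1=r_2$.
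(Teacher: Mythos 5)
Your first two steps are sound and coincide with the paper's actual argument: positivity of $x_m$ for $m\ge 0$ is read off from Theorem~\ref{mainthm} exactly as you do (each summand of \eqref{maineq1} is a single Laurent monomial with coefficient $1$, so like terms can only accumulate), and your identity $\sigma\circ F_r\circ\sigma=F_r^{-1}$, hence $x_{-n}=\sigma(y_n)$ and $y_{-n}=\sigma(x_n)$, is correct. But this symmetry only \emph{converts} the problem: it leaves positivity of $y_n$ for $n\ge 1$ (equivalently of $x_{-n}$ for $n\ge 1$) untouched, and that is precisely the part your proposal does not prove --- you flag it yourself as ``the main obstacle'' and offer two unexecuted strategies. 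The paper closes this half not by a new computation but by invoking the full symmetry argument of \cite[Section 2.3]{DK} and \cite[Lemma 7]{BR}, which does cover $y_n$ and negative indices; your $\sigma$ recovers only the easy half of that symmetry. So as it stands the proposal is an incomplete proof.

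Worse, the first of your two proposed repairs is provably impossible. If $\widehat Q_n^{-1}\widehat Q_{n+1}$ were a Laurent polynomial $R_n$, then $\widehat Q_{n+1}=\widehat Q_n R_n$ would be an identity inside the ring of non-commutative Laurent polynomials, and abelianizing it (a ring homomorphism on that subring; note $C=xyx^{-1}y^{-1}\mapsto 1$, so $\widehat Q_m\mapsto x_m$) would exhibit $x_{n+1}/x_n$ as a \emph{commutative} Laurent polynomial. That fails, because consecutive commutative cluster variables are coprime non-units: already for $r=2$ one has $x_2=(1+y^2)/x$ and $x_3=\bigl((1+y^2)^2+x^2\bigr)/(x^2y)$, so $x_3/x_2=\bigl((1+y^2)^2+x^2\bigr)/\bigl(xy(1+y^2)\bigr)$, which is not Laurent since $(1+y^2)\nmid x^2$; the same coprimality argument rules out every $n$ in your range. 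Hence no right factorization $\widehat Q_{n+1}=\widehat Q_nR_n$ exists: your telescoped identity $y_n=x\,\widehat Q_n^{-1}\widehat Q_{n+1}\,yx^{-1}y^{-1}x\,\widehat Q_n x^{-1}$ is a true identity in $K$, but positivity cannot be distributed over its factors, so the problem does not ``collapse'' as claimed. Your second suggestion --- a companion lattice-path formula for $y_n$ --- is viable in principle but would amount to redoing the work of the whole paper for $y$, and none of it is carried out; the shortest correct completion of your argument is exactly the citation the paper uses.
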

\begin{proof} The theorem implies that $x_n$ is a non-commutative Laurent polynomial of $x$ and $y$ with non-negative integer coefficients, for $n\ge 0$. The statements for $x_n (n<0)$ and $y_n$ then follow from a symmetry argument, see \cite[section 2.3]{DK} or \cite[Lemma 7]{BR}.
%For $n<0$, the statement then follows from a symmetry argument, see \cite[section 2.3]{DK}.
%The case of $n < 0 $ can be taken care of by considering the anti-automorphism of $k< x_1^{\pm 1}  , x_2^{\pm 1} >$ as in \cite[Lemma 7]{BR}. Since $F_r^{-1}$  is obtained by interchanging $x$ and $y$ in the definition of $F$, the statement for $y_n $ easily follows from the one for $x_n$.
\end{proof}

\begin{rem}\label{rem1}
The right hand side of equation (\ref{maineq1}) can be written as a double sum as follows.
\begin{equation}\label{doublesum}x_{n-1}=\sum_{i_j,k_j} \sum_\beta  xyx^{-1}y^{-1}x\left(\prod_{i=1}^{c_{n-1}}\beta_{[i]}\right)x^{-1}, \end{equation}
where the first sum is over all sequences $0\le i_1<k_1<\cdots<i_\ell<k_\ell\le c_{n-2}$ and the second sum is over all $\beta\in \mathcal{F}(\mathcal{D}_n)$ whose colored subpaths are precisely the $\alpha(i_j,k_j)$ for $1\le j\le \ell$.
\end{rem}

\begin{exmp}\label{mainexmp}  Let $r=3$ and $n=5$. 
We use the following presentation for monomials in $K$:
$$x^{a_1}y^{b_1}x^{a_2}y^{b_2}\cdots  x^{a_{m-1}}y^{b_{m-1}} x^{a_m}y^{b_m}\longleftrightarrow \left(\begin{array}{ccccc} a_1 & a_2 & \cdots & a_{m-1}& a_m\\  b_1 & b_2 & \cdots & b_{m-1} & b_m \end{array} \right).$$
These expressions are not necessarily minimal, i.e. some of $a_i$ or $b_i$ are allowed to be zeroes.

The illustrations below show the possible configurations for $\beta\in\mathcal{F}(\mathcal{D}_n)$.  If the edge $\alpha_i$ is marked $\begin{picture}(25,10)\linethickness{3pt}\put(2,2){\line(1,0){20}}\color{white}\put(9,2){\line(1,0){6}} \end{picture}$, then $\alpha_i$ can occur in $\beta$. Using the double sum expression of equation (\ref{doublesum}), we get that $x_{n-1}$ is the sum of all the sums below.
$$
\begin{array}{cc}
&\\
&\\
&\\
\hspace{16pt} \begin{picture}(100,60)
\boxs{0,0}\boxs{20,0}\boxs{40,0}\boxs{60,0}\boxs{80,0}
\boxs{0,20}\boxs{20,20}\boxs{40,20}\boxs{60,20}\boxs{80,20}
\boxs{0,40}\boxs{20,40}\boxs{40,40}\boxs{60,40}\boxs{80,40}
\linethickness{1pt}\put(0,0){\line(5,3){100}}
\linethickness{3pt}\put(0,0){\line(1,0){40}}
\linethickness{3pt}\put(40,0){\line(0,1){20}}
\linethickness{3pt}\put(40,20){\line(1,0){40}}
\linethickness{3pt}\put(80,20){\line(0,1){20}}
\linethickness{3pt}\put(80,40){\line(1,0){20}}
\linethickness{3pt}\put(100,40){\line(0,1){20}}\color{white}\put(47,20){\line(1,0){6}}\linethickness{3pt}\color{white}\put(67,20){\line(1,0){6}}\color{white}\put(7,0){\line(1,0){6}}\linethickness{3pt}\color{white}\put(27,0){\line(1,0){6}}\color{white}\put(87,40){\line(1,0){6}}\put(40,7){\line(0,1){6}}\put(80,27){\line(0,1){6}}\put(100,47){\line(0,1){6}}
\end{picture}
&\,\,\,\,\,\,\,\,\,\,\,\,\,\,\,\,\,\,\,\,\,\,\,\,\,\,\,\,\,\,\,\,\,\,\,\,\,\,\,\,\,\,\,\,\,\,\,\,\,\,\,\,\,\begin{picture}(300,60)\put(-80,35){$\tiny{\sum_{\begin{array}{l}\mathbf{\beta}\subset \{\alpha_1,\cdots,\alpha_{8} \}  \end{array}}xyx^{-1}y^{-1}x\left(\prod_{i=1}^{c_{n-1}}\beta_{[i]}\right)x^{-1}}$}\put(-80,15){$= A_1$}\end{picture}
\\
\hspace{16pt} \begin{picture}(100,60)
\boxs{0,0}\boxs{20,0}\boxs{40,0}\boxs{60,0}\boxs{80,0}
\boxs{0,20}\boxs{20,20}\boxs{40,20}\boxs{60,20}\boxs{80,20}
\boxs{0,40}\boxs{20,40}\boxs{40,40}\boxs{60,40}\boxs{80,40}
\linethickness{1pt}\put(0,0){\line(5,3){100}}
\linethickness{3pt}\color{blue}\put(0,0){\line(1,0){40}}
\linethickness{3pt}\color{blue}\put(40,0){\line(0,1){20}}
\linethickness{3pt}\color{black}\put(40,20){\line(1,0){40}}
\linethickness{3pt}\put(80,20){\line(0,1){20}}
\linethickness{3pt}\put(80,40){\line(1,0){20}}
\linethickness{3pt}\put(100,40){\line(0,1){20}}\color{white}\put(47,20){\line(1,0){6}}\color{white}\put(67,20){\line(1,0){6}}\put(87,40){\line(1,0){6}}\put(80,27){\line(0,1){6}}\put(100,47){\line(0,1){6}}
\end{picture}
&\,\,\,\,\,\,\,\,\,\,\,\,\,\,\,\,\,\,\,\,\,\,\,\,\,\,\,\,\,\,\,\,\,\,\,\,\,\,\,\,\,\,\,\,\,\,\,\,\,\,\,\,\,\begin{picture}(300,60)\put(-80,35){$\tiny{\sum_{\begin{array}{l}\{\alpha(0,1)\}\subset\mathbf{\beta}\subset\{\alpha(0,1)\}\cup  \{\alpha_4,\cdots,\alpha_{8} \}  \end{array}}xyx^{-1}y^{-1}x\left(\prod_{i=1}^{c_{n-1}}\beta_{[i]}\right)x^{-1}}$}\put(-80,15){$= A_2$}\end{picture}\\
\hspace{16pt} \begin{picture}(100,60)
\boxs{0,0}\boxs{20,0}\boxs{40,0}\boxs{60,0}\boxs{80,0}
\boxs{0,20}\boxs{20,20}\boxs{40,20}\boxs{60,20}\boxs{80,20}
\boxs{0,40}\boxs{20,40}\boxs{40,40}\boxs{60,40}\boxs{80,40}
\linethickness{1pt}\put(0,0){\line(5,3){100}}
\linethickness{3pt}\color{blue}\put(0,0){\line(1,0){40}}
\linethickness{3pt}\color{blue}\put(40,0){\line(0,1){20}}
\linethickness{3pt}\put(40,20){\line(1,0){40}}
\linethickness{3pt}\put(80,20){\line(0,1){20}}
\linethickness{3pt}\color{black}\put(80,40){\line(1,0){20}}
\linethickness{3pt}\put(100,40){\line(0,1){20}}\color{white}\put(87,40){\line(1,0){6}}\put(100,47){\line(0,1){6}}
\end{picture}
&\,\,\,\,\,\,\,\,\,\,\,\,\,\,\,\,\,\,\,\,\,\,\,\,\,\,\,\,\,\,\,\,\,\,\,\,\,\,\,\,\,\,\,\,\,\,\,\,\,\,\,\,\,\begin{picture}(300,60)\put(-80,35){$\tiny{\sum_{\begin{array}{l}\{\alpha(0,2)\}\subset\mathbf{\beta}\subset\{\alpha(0,2)\}\cup  \{\alpha_7,\alpha_{8} \}  \end{array}}xyx^{-1}y^{-1}x\left(\prod_{i=1}^{c_{n-1}}\beta_{[i]}\right)x^{-1}}$}\put(-80,15){$= A_3$}\end{picture}\\
\hspace{16pt} \begin{picture}(100,60)
\boxs{0,0}\boxs{20,0}\boxs{40,0}\boxs{60,0}\boxs{80,0}
\boxs{0,20}\boxs{20,20}\boxs{40,20}\boxs{60,20}\boxs{80,20}
\boxs{0,40}\boxs{20,40}\boxs{40,40}\boxs{60,40}\boxs{80,40}
\linethickness{1pt}\put(0,0){\line(5,3){100}}
\linethickness{3pt}\color{blue}\put(0,0){\line(1,0){40}}
\linethickness{3pt}\color{blue}\put(40,0){\line(0,1){20}}
\linethickness{3pt}\put(40,20){\line(1,0){40}}
\linethickness{3pt}\put(80,20){\line(0,1){20}}
\linethickness{3pt}\put(80,40){\line(1,0){20}}
\linethickness{3pt}\put(100,40){\line(0,1){20}}
\end{picture}
&\,\,\,\,\,\,\,\,\,\,\,\,\,\,\,\,\,\,\,\,\,\,\,\,\,\,\,\,\,\,\,\,\,\,\,\,\,\,\,\,\,\,\,\,\,\,\,\,\,\,\,\,\,\begin{picture}(300,60)\put(-80,35){$\tiny{\sum_{\begin{array}{l}\mathbf{\beta}=\{\alpha(0,3)\} \end{array}}xyx^{-1}y^{-1}x\left(\prod_{i=1}^{c_{n-1}}\beta_{[i]}\right)x^{-1}}$}\put(-80,15){$= A_4$}\end{picture}\\
\hspace{16pt} \begin{picture}(100,60)
\boxs{0,0}\boxs{20,0}\boxs{40,0}\boxs{60,0}\boxs{80,0}
\boxs{0,20}\boxs{20,20}\boxs{40,20}\boxs{60,20}\boxs{80,20}
\boxs{0,40}\boxs{20,40}\boxs{40,40}\boxs{60,40}\boxs{80,40}
\linethickness{1pt}\put(0,0){\line(5,3){100}}
\linethickness{3pt}\put(0,0){\line(1,0){40}}
\linethickness{3pt}\put(40,0){\line(0,1){20}}
\linethickness{3pt}\color{blue}\put(40,20){\line(1,0){40}}
\linethickness{3pt}\put(80,20){\line(0,1){20}}
\linethickness{3pt}\color{black}\put(80,40){\line(1,0){20}}
\linethickness{3pt}\put(100,40){\line(0,1){20}}\color{white}\put(87,40){\line(1,0){6}}\put(7,0){\line(1,0){6}}\put(27,0){\line(1,0){6}}\put(40,7){\line(0,1){6}}\put(100,47){\line(0,1){6}}
\end{picture}
&\,\,\,\,\,\,\,\,\,\,\,\,\,\,\,\,\,\,\,\,\,\,\,\,\,\,\,\,\,\,\,\,\,\,\,\,\,\,\,\,\,\,\,\,\,\,\,\,\,\,\,\,\,\begin{picture}(300,60)\put(-80,35){$\tiny{\sum_{\begin{array}{l}\{\alpha(1,2)\}\subset \mathbf{\beta}\subset\{\alpha(1,2)\}\cup  \{\alpha_1,\alpha_2,\alpha_{3},\alpha_{7},\alpha_{8} \}  \end{array}}xyx^{-1}y^{-1}x\left(\prod_{i=1}^{c_{n-1}}\beta_{[i]}\right)x^{-1}}$}\put(-80,15){$= A_5$}\end{picture}\\
\hspace{16pt} \begin{picture}(100,60)
\boxs{0,0}\boxs{20,0}\boxs{40,0}\boxs{60,0}\boxs{80,0}
\boxs{0,20}\boxs{20,20}\boxs{40,20}\boxs{60,20}\boxs{80,20}
\boxs{0,40}\boxs{20,40}\boxs{40,40}\boxs{60,40}\boxs{80,40}
\linethickness{1pt}\put(0,0){\line(5,3){100}}
\linethickness{3pt}\put(0,0){\line(1,0){40}}
\linethickness{3pt}\put(40,0){\line(0,1){20}}
\linethickness{3pt}\color{green}\put(40,20){\line(1,0){40}}
\linethickness{3pt}\put(80,20){\line(0,1){20}}
\linethickness{3pt}\put(80,40){\line(1,0){20}}
\linethickness{3pt}\put(100,40){\line(0,1){20}}\color{white}\put(7,0){\line(1,0){6}}\put(27,0){\line(1,0){6}}
\end{picture}
&\,\,\,\,\,\,\,\,\,\,\,\,\,\,\,\,\,\,\,\,\,\,\,\,\,\,\,\,\,\,\,\,\,\,\,\,\,\,\,\,\,\,\,\,\,\,\,\,\,\,\,\,\,\begin{picture}(300,60)\put(-80,35){$\tiny{\sum_{\begin{array}{l}\{\alpha(1,3)\}\cup  \{\alpha_3\}\subset\mathbf{\beta}\subset\{\alpha(1,3)\}\cup  \{\alpha_1,\alpha_2,\alpha_3 \}  \end{array}}xyx^{-1}y^{-1}x\left(\prod_{i=1}^{c_{n-1}}\beta_{[i]}\right)x^{-1}}$}\put(-80,15){$= A_6$}\end{picture}\\
\hspace{16pt} \begin{picture}(100,60)
\boxs{0,0}\boxs{20,0}\boxs{40,0}\boxs{60,0}\boxs{80,0}
\boxs{0,20}\boxs{20,20}\boxs{40,20}\boxs{60,20}\boxs{80,20}
\boxs{0,40}\boxs{20,40}\boxs{40,40}\boxs{60,40}\boxs{80,40}
\linethickness{1pt}\put(0,0){\line(5,3){100}}\linethickness{3pt}
\linethickness{3pt}\put(0,0){\line(1,0){40}}\color{white}\put(7,0){\line(1,0){6}}\linethickness{3pt}\color{white}\put(27,0){\line(1,0){6}}
\linethickness{3pt}\color{black}\put(40,0){\line(0,1){20}}
\linethickness{3pt}\put(40,20){\line(1,0){40}}\color{white}\put(47,20){\line(1,0){6}}\linethickness{3pt}\color{white}\put(67,20){\line(1,0){6}}
\linethickness{3pt}\color{red}\put(80,20){\line(0,1){20}}
\linethickness{3pt}\put(80,40){\line(1,0){20}}
\linethickness{3pt}\put(100,40){\line(0,1){20}}\color{white}\put(40,7){\line(0,1){6}}
\end{picture}
&\,\,\,\,\,\,\,\,\,\,\,\,\,\,\,\,\,\,\,\,\,\,\,\,\,\,\,\,\,\,\,\,\,\,\,\,\,\,\,\,\,\,\,\,\,\,\,\,\,\,\,\,\,\begin{picture}(300,60)\put(-80,35){$\tiny{\sum_{\begin{array}{l} \{\alpha(2,3)\}\subset\mathbf{\beta}\subset\{\alpha(2,3)\}\cup  \{\alpha_1,\cdots,\alpha_{5} \}  \end{array}}xyx^{-1}y^{-1}x\left(\prod_{i=1}^{c_{n-1}}\beta_{[i]}\right)x^{-1}}$}\put(-80,15){$= A_7$}\end{picture}\\
\hspace{16pt} \begin{picture}(100,60)
\boxs{0,0}\boxs{20,0}\boxs{40,0}\boxs{60,0}\boxs{80,0}
\boxs{0,20}\boxs{20,20}\boxs{40,20}\boxs{60,20}\boxs{80,20}
\boxs{0,40}\boxs{20,40}\boxs{40,40}\boxs{60,40}\boxs{80,40}
\linethickness{1pt}\put(0,0){\line(5,3){100}}
\linethickness{3pt}\color{blue}\put(0,0){\line(1,0){40}}
\linethickness{3pt}\put(40,0){\line(0,1){20}}
\linethickness{3pt}\color{black}\put(40,20){\line(1,0){40}}
\linethickness{3pt}\color{white}\put(47,20){\line(1,0){6}}\linethickness{3pt}\color{white}\put(67,20){\line(1,0){6}}
\linethickness{3pt}\color{red}\put(80,20){\line(0,1){20}}
\linethickness{3pt}\put(80,40){\line(1,0){20}}
\linethickness{3pt}\put(100,40){\line(0,1){20}}
\end{picture}
&\,\,\,\,\,\,\,\,\,\,\,\,\,\,\,\,\,\,\,\,\,\,\,\,\,\,\,\,\,\,\,\,\,\,\,\,\,\,\,\,\,\,\,\,\,\,\,\,\,\,\,\,\,\begin{picture}(300,60)\put(-80,35){$\tiny{\sum_{\begin{array}{l} \{\alpha(0,1),\alpha(2,3)\}\subset\mathbf{\beta}\subset\{\alpha(0,1),\alpha(2,3)\}\cup  \{\alpha_4,\alpha_{5} \}  \end{array}}xyx^{-1}y^{-1}x\left(\prod_{i=1}^{c_{n-1}}\beta_{[i]}\right)x^{-1}}$}\put(-80,15){$= A_8,$}\end{picture}\end{array}
$$

\clearpage

\noindent where
\tiny{
$$\begin{array}{ll} A_1=\sum_{\delta_1,\delta_2,...,\delta_8\in\{0,1\}}&\left(\begin{array}{ccccccccccc} 1 &  -1 & -1&-1& -1&-1&-1& -1&-1&-1\\ 1&2-3\delta_1 &3-3\delta_2&2-3\delta_3& 3-3\delta_4&3-3\delta_5&2-3\delta_6&3-3\delta_7&2-3\delta_8& 0 \end{array} \right),\\
A_2=\sum_{\delta_4,\delta_5,...,\delta_8\in\{0,1\}}&
\left(\begin{array}{ccccccccccc}  1 &  -1 & 0&1&-1& -1&-1&-1&-1& -1\\
1& \hspace{8pt} -1\hspace{8pt}  &\hspace{11.3pt} 0\hspace{11.3pt}&\hspace{8pt} -1\hspace{8pt}& 3-3\delta_4&3-3\delta_5&2-3\delta_6&3-3\delta_7&2-3\delta_8& 0 \end{array} \right),\\
A_3=\sum_{\delta_7,\delta_8\in\{0,1\}}&\left(\begin{array}{ccccccccccc}  1 &  -1 & 0&1&0& 0&0&-1& -1&-1\\ 1&\hspace{8pt} -1\hspace{8pt} &\hspace{11.3pt} 0\hspace{11.3pt}&\hspace{8pt} -1\hspace{8pt}& \hspace{11.3pt} 0\hspace{11.3pt}&\hspace{11.3pt} 0\hspace{11.3pt}&\hspace{8pt} -1\hspace{8pt}&3-3\delta_7&2-3\delta_8& 0\end{array} \right),\\
A_4=&\left(\begin{array}{ccccccccccc}  1 &  -1 & 0&1& 0&0&0& 0&1&-1\\ 1&\hspace{8pt} -1\hspace{8pt} &\hspace{11.3pt} 0\hspace{11.3pt}&\hspace{8pt} -1\hspace{8pt}& \hspace{11.3pt} 0\hspace{11.3pt}&\hspace{11.3pt} 0\hspace{11.3pt}&\hspace{8pt} -1\hspace{8pt}&\hspace{11.3pt} 0\hspace{11.3pt}&\hspace{8pt} -1\hspace{8pt}& 0 \end{array} \right)
,\\
A_5=\sum_{\delta_1,\delta_2,\delta_3,\delta_7,\delta_8\in\{0,1\}}&\left(\begin{array}{ccccccccccc}  1 &  -1 & -1&-1&0& 0&0&-1& -1&-1\\ 1&2-3\delta_1 &3-3\delta_2&2-3\delta_3& \hspace{11.3pt} 0\hspace{11.3pt}&\hspace{11.3pt} 0\hspace{11.3pt}&\hspace{8pt} -1\hspace{8pt}&3-3\delta_7&2-3\delta_8& 0\end{array} \right),\\
A_6=\sum_{\delta_1,\delta_2\in\{0,1\}}&\left(\begin{array}{ccccccccccc}  1 &  -1 & -1&-1& 0&0&0& 0&1&-1\\ 1&2-3\delta_1 &3-3\delta_2&\hspace{8pt} -1\hspace{8pt}& \hspace{11.3pt} 0\hspace{11.3pt}&\hspace{11.3pt} 0\hspace{11.3pt}&\hspace{8pt} -1\hspace{8pt}&\hspace{11.3pt} 0\hspace{11.3pt}&\hspace{8pt} -1\hspace{8pt}& 0 \end{array} \right),\\
A_7=\sum_{\delta_1,\delta_2,...,\delta_5\in\{0,1\}}&\left(\begin{array}{ccccccccccc}  1 &  -1 & -1&-1& -1&-1&-1& 0&1&-1\\ 1&2-3\delta_1 &3-3\delta_2&2-3\delta_3& 3-3\delta_4&3-3\delta_5&\hspace{8pt} -1\hspace{8pt}&\hspace{11.3pt} 0\hspace{11.3pt}&\hspace{8pt} -1\hspace{8pt}& 0 \end{array} \right),\\
A_8=\sum_{\delta_4,\delta_5\in\{0,1\}}&\left(\begin{array}{ccccccccccc}  1 &  -1 & 0&1& -1&-1&-1& 0&1&-1\\ 1&\hspace{8pt} -1\hspace{8pt} &\hspace{11.3pt} 0\hspace{11.3pt}&\hspace{8pt} -1\hspace{8pt}& 3-3\delta_4&3-3\delta_5&\hspace{8pt} -1\hspace{8pt}&\hspace{11.3pt} 0\hspace{11.3pt}&\hspace{8pt} -1\hspace{8pt}& 0 \end{array} \right).\end{array}$$
}
\normalsize{Then $x_4=\sum_{i=1}^8 A_i$.}
\end{exmp}

\section{Proofs}

%Since (\ref{maineq2}) can be easily obtained from (\ref{maineq1}) by interchanging $x_1$ and $x_2$, we will prove (\ref{maineq1}). In addition to our theoretical proof, our formula is checked by Macaulay 2 for any $r,n$ with $r+n\leq 11$. 
 We need more notation.

\begin{defn}
For integers $u,n$ with $3\leq u\leq n-1$, let $$\mathcal{T}^{\geq u}(\mathcal{D}_n):=\left\{\{\beta_1,\cdots,\beta_t\}\,\left|\aligned&\bullet \,\,t\geq 1,\,\beta_j\in \mathcal{P}(\mathcal{D}_n)\text{ for all }1\leq j\leq t,\,\\
 &\bullet \text{ if }j\neq j'\text{ then }\beta_j\text{ and }\beta_{j'}\text{ have no common edge,}\\ 
 &\bullet \text{ if }\beta_j=\alpha(i,k)\text{ and }\beta_{j'}=\alpha(i',k')\text{ then }i\neq k'\text{ and }i'\neq k,\\
&\bullet \text{ and there exist integers }j,w,m,\text{ with }m\geq u\text{ such that }\\
&\,\,\,\,\,\,\,\,\,\,\,\,\,\,\beta_j\text{ is }(m,w)\text{-green and none of the }(c_{m-1}-wc_{m-2})\\&\,\,\,\,\,\,\,\,\,\,\,\,\,\,\text{preceding edges of }v_i\text{ is contained in any }\beta_{j'}\endaligned\right.\right\}.
$$
\end{defn}
\begin{defn}
Let $$ \widetilde{\mathcal{F}}(\mathcal{D}_n):=\left\{\{\beta_1,\cdots,\beta_t\}\,\left|\aligned&\bullet \,\,t\geq 0,\,\beta_j\in \mathcal{P}(\mathcal{D}_n)\text{ for all }1\leq j\leq t,\,\\ &\bullet \text{ if }j\neq j'\text{ then }\beta_j\text{ and }\beta_{j'}\text{ have no common edge,}\\ &\bullet \text{ and if }\beta_j=\alpha(i,k)\text{ and }\beta_{j'}=\alpha(i',k')\text{ then }i\neq k'\text{ and }i'\neq k\endaligned\right.\right\}.  $$  
\end{defn}

Note that \begin{equation}\label{20110516eq}{\mathcal{F}}(\mathcal{D}_n)=\widetilde{\mathcal{F}}(\mathcal{D}_n)\setminus\mathcal{T}^{\geq 3}(\mathcal{D}_{n}).\end{equation}

\begin{lem}\label{20110516eq2}
If $m\geq n-1$, then there do not exist $i,w$  $(1\leq w< r-1)$ such that $\min\{t\,|\,i<t\leq c_{n-2},\,s_{i,t}> s\}$ is of the form $i+c_m-wc_{m-1}$. In particular, for any $n\geq 4$, the set $\mathcal{T}^{\geq n-1}(\mathcal{D}_{n})$ is empty.
 \end{lem}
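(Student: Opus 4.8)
The plan is to deduce the ``in particular'' statement from the first assertion, and to establish the first assertion by a direct estimate on the sequence $\{c_n\}$. I would first dispose of the degenerate case $r=2$: there the index range $1\le w<r-1$ is empty, so $\mathcal{D}_n$ carries no $(m,w)$-green subpath whatsoever, and both assertions hold vacuously. So I may assume $r\ge 3$.

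For the first assertion, fix $i\ge 0$, an integer $w$ with $1\le w\le r-2$, and $m\ge n-1$, and set $t:=i+c_m-wc_{m-1}$. Since every index $t'$ in the set $\{t'\mid i<t'\le c_{n-2},\ s_{i,t'}>s\}$ satisfies $t'\le c_{n-2}$, it is enough to show $t>c_{n-2}$, for then $t$ cannot equal the minimum of that set (whether or not the set is empty, there is no valid $t'$ equal to $t$). Using $w\le r-2$ and the recurrence $c_m=rc_{m-1}-c_{m-2}$, I get $t\ge c_m-(r-2)c_{m-1}=2c_{m-1}-c_{m-2}=:d_m$, so it suffices to prove $d_m>c_{n-2}$ for all $m\ge n-1$. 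This splits into a base case and a monotonicity step. For the base case, $d_{n-1}=2c_{n-2}-c_{n-3}=c_{n-2}+(c_{n-2}-c_{n-3})>c_{n-2}$, because $\{c_n\}$ is strictly increasing for $r\ge 3$, so $c_{n-2}>c_{n-3}$ as $n\ge 4$. For monotonicity, the recurrence gives $d_{m+1}-d_m=(2r-3)c_{m-1}-c_{m-2}$, which is positive for $m\ge 3$ since $r\ge 3$ forces $(2r-3)c_{m-1}-c_{m-2}\ge 3c_{m-1}-c_{m-2}>0$. Hence $d_m\ge d_{n-1}>c_{n-2}$ for all $m\ge n-1$, giving $t>c_{n-2}$ as required.

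Finally, to conclude $\mathcal{T}^{\ge n-1}(\mathcal{D}_n)=\emptyset$, I would argue by contradiction. Any element of $\mathcal{T}^{\ge n-1}(\mathcal{D}_n)$ contains a subpath $\beta_j=\alpha(i,k)$ that is $(m,w)$-green with $m\ge n-1$; by case (2-a) of Definition~\ref{alpha(i,k)} this means $t_0:=\min\{t'\mid i<t'\le k,\ s_{i,t'}>s\}$ equals $i+c_m-wc_{m-1}$, and in particular $t_0\le k\le c_{n-2}$. The only point needing care is the range mismatch: I must check that $t_0$ is also the minimum over the wider range $i<t'\le c_{n-2}$. This holds because the smaller index set is contained in the larger one and its minimum $t_0$ already satisfies $t_0\le k$; any index $t'<t_0$ in the larger set with $s_{i,t'}>s$ would satisfy $i<t'\le k$ and hence already lie in the smaller set, contradicting minimality of $t_0$ there. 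Thus $\min\{t'\mid i<t'\le c_{n-2},\ s_{i,t'}>s\}=i+c_m-wc_{m-1}$ with $m\ge n-1$, contradicting the first assertion. The whole argument is elementary; the only genuine work is the inequality $2c_{m-1}-c_{m-2}>c_{n-2}$ for $m\ge n-1$, and the only subtlety is the $t'\le k$ versus $t'\le c_{n-2}$ bookkeeping just described.
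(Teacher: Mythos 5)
Your proof is correct and takes essentially the same route as the paper's: both arguments show that $i+c_m-wc_{m-1}>c_{n-2}$ whenever $m\ge n-1$ and $1\le w\le r-2$ (the paper by bounding below by $c_{n-1}-wc_{n-2}=2c_{n-2}-c_{n-3}+(r-2-w)c_{n-2}>c_{n-2}$, you by the monotone quantity $d_m=2c_{m-1}-c_{m-2}$), which contradicts the fact that the minimum, being an element of a set whose members are at most $c_{n-2}$, cannot exceed $c_{n-2}$. Your write-up is merely more explicit than the paper's on three points it leaves implicit --- the vacuous case $r=2$, the monotonicity of the bound in $m$, and the $(i,k]$ versus $(i,c_{n-2}]$ bookkeeping needed for the ``in particular'' conclusion --- but the underlying argument is the same.
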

\begin{proof}
If $m\geq n-1$ and $\min\{t\,|\,i<t\leq c_{n-2},\,s_{i,t}> s\}=i+c_m-wc_{m-1}$, then $\min\{t\,|\,i<t\leq c_{n-2},\,s_{i,t}> s\}\geq c_{n-1}-wc_{n-2}$, which would be greater than $c_{n-2}$ because $w\leq r-2$. But this is a contradiction, because $v_{c_{n-2}}$ is the highest vertex in $\mathcal{D}_{n}$.
\end{proof}

Let $z_2=x_2$ and
\begin{equation}\label{20110411z}z_{n-1}=\sum_{\mathbf{\beta}\in\widetilde{\mathcal{F}}(\mathcal{D}_n)}xyx^{-1}y^{-1}x\left(\prod_{i=1}^{c_{n-1}}\beta_{[i]}\right)x^{-1} \end{equation}for $n\geq 4$.

\begin{lem}\label{20110411lem1} Let $n\geq 3$. Then
$$z_{n}=F(z_{n-1})+\sum_{\mathbf{\beta}\in\mathcal{T}^{\geq 3}(\mathcal{D}_{n+1})\setminus \mathcal{T}^{\geq 4}(\mathcal{D}_{n+1})}xyx^{-1}y^{-1}x\left(\prod_{i=1}^{c_{n}}\beta_{[i]}\right)x^{-1}.$$
\end{lem}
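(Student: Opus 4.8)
The plan is to compute $F(z_{n-1})$ directly from \eqref{20110411z} and to match the result term by term against the defining sum for $z_n$, the mismatch being the stated correction. Since $F=F_r$ is a ring automorphism, $F(z_{n-1})=\sum_{\beta\in\widetilde{\mathcal F}(\mathcal D_n)}F(\mathrm{mon}_\beta)$ with $\mathrm{mon}_\beta=xyx^{-1}y^{-1}x\big(\prod_{i=1}^{c_{n-1}}\beta_{[i]}\big)x^{-1}$, so it is enough to understand $F$ on the prefix $xyx^{-1}y^{-1}x$, on the trailing $x^{-1}$, and on each edge-weight $\beta_{[i]}$. First I would record the clean simplifications: using \eqref{Kont_map} and the fact that $y$ commutes with $1+y^r$, one computes $F(xyx^{-1}y^{-1}x)=xyx^{-1}y^{-1}xyx^{-1}$ and $F(x^{-1})=xy^{-1}x^{-1}$. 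Thus $F$ already reproduces the prefix $xyx^{-1}y^{-1}x$ required by $z_n$ and leaves behind an extra factor $yx^{-1}$ to be absorbed into the image of the interior product.

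The conceptual key is the shape of $F(y)=(1+y^r)x^{-1}$. When $F$ is applied to the interior product, every positive power of $y$ becomes a string of factors $1+y^r$ (interleaved with $x^{-1}$), while every $y^{-1}$ contributes a factor $(1+y^r)^{-1}$. First I would check that all the inverse factors cancel against genuine factors $1+y^r$ coming from neighbouring edges; this cancellation, which is exactly what the non-overlap and endpoint conditions defining $\widetilde{\mathcal F}(\mathcal D_n)$ are arranged to guarantee, is the reason $F(\mathrm{mon}_\beta)$ is again an integral Laurent polynomial. Then I would expand each surviving factor as $1+y^r$: its two terms correspond \emph{exactly} to declaring a new edge of $\mathcal D_{n+1}$ (one present in $\mathcal D_{n+1}$ but not in $\mathcal D_n$) covered, giving weight $x^{-1}y^0$ from the term $1$, or uncovered, giving weight $x^{-1}y^r$ from the term $y^r$ — precisely the first and third cases in the definition of $\beta_{[i]}$, and the source of the binary sums $\sum_\delta$ visible in Example~\ref{mainexmp}.

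The heart of the argument is then a reindexing built on the recursive construction of the maximal Dyck path: the Christoffel word of $\mathcal D_{n+1}$ is obtained from that of $\mathcal D_n$ in accordance with $c_{n+1}=rc_n-c_{n-1}$. I would make this recursion explicit as a \emph{partial} contraction $\pi$ sending a configuration $\gamma$ on $\mathcal D_{n+1}$ to a configuration $\pi(\gamma)$ on $\mathcal D_n$, defined precisely on those $\gamma$ that carry no bad green at the bottom level, and then prove that for each fixed $\beta\in\widetilde{\mathcal F}(\mathcal D_n)$ the monomials occurring in $F(\mathrm{mon}_\beta)$ are exactly the $\mathrm{mon}_\gamma$ with $\pi(\gamma)=\beta$, the weights $\gamma_{[i]}$ reassembling (after the $x^{-1}\!\cdot\! x$ cancellations and absorption of the extra $yx^{-1}$) into $F(\mathrm{mon}_\beta)$. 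Summing over $\beta$ gives $F(z_{n-1})=\sum_{\gamma\in\mathrm{dom}(\pi)}\mathrm{mon}_\gamma$. Finally I would identify the complement of $\mathrm{dom}(\pi)$: because a green subpath at level $m$ in $\mathcal D_n$ is carried to one at level $m+1$ in $\mathcal D_{n+1}$, any bad green surviving in a configuration of $\mathrm{dom}(\pi)$ sits at level $\ge 4$, so the only configurations missed are those whose bad greens occur only at the new bottom level $m=3$, namely $\mathcal T^{\ge 3}(\mathcal D_{n+1})\setminus\mathcal T^{\ge 4}(\mathcal D_{n+1})$; adding these back recovers $z_n$. Lemma~\ref{20110516eq2} keeps this level bookkeeping finite.

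The step I expect to be the main obstacle is the fiber description: showing that the full non-commutative expansion of $F(\mathrm{mon}_\beta)$, with all its interleaved factors $(1+y^r)^{\pm1}$ and $x^{\pm1}$, reorganizes \emph{precisely} into the weights $\gamma_{[i]}$ prescribed by the eight cases of the definition for the finer path $\mathcal D_{n+1}$, and that $\pi$ has exactly the claimed domain. This is a delicate but purely combinatorial matching that hinges on tracking how the slopes $s_{i,k}$ — and hence the blue/green/red coloring and the green levels — transform under the passage from $\mathcal D_n$ to $\mathcal D_{n+1}$, and in particular on verifying that lifting never creates a \emph{bad} green at level $3$. The base case $n=3$, where $z_2=x_2$ is given directly rather than as a sum over $\widetilde{\mathcal F}(\mathcal D_3)$, I would verify separately by a short direct computation.
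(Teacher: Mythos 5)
Your overall architecture --- expand $F(z_{n-1})$ term by term, match the result against the monomials of $z_n$, and identify the unmatched configurations as $\mathcal{T}^{\geq 3}(\mathcal{D}_{n+1})\setminus\mathcal{T}^{\geq 4}(\mathcal{D}_{n+1})$ --- is the same as the paper's, and your preliminary computations $F(xyx^{-1}y^{-1}x)=xyx^{-1}y^{-1}xyx^{-1}$ and $F(x^{-1})=xy^{-1}x^{-1}$ are correct. But there is a genuine gap at precisely the step you flag as the heart of the argument. You claim that for a single $\beta\in\widetilde{\mathcal{F}}(\mathcal{D}_n)$ all factors $(1+y^r)^{-1}$ cancel, so that $F$ of the single monomial $xyx^{-1}y^{-1}x\big(\prod_{i}\beta_{[i]}\big)x^{-1}$ is an integral Laurent polynomial whose terms are exactly the monomials of a fiber $\pi^{-1}(\beta)$. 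This is false, and it is not what the conditions defining $\widetilde{\mathcal{F}}(\mathcal{D}_n)$ guarantee. Every supported vertical edge contributes a $y^{-1}$, hence a factor $x(1+y^r)^{-1}$ after applying $F$, and such a factor cancels against an adjacent $(1+y^r)$ only when the relevant neighbouring edge is \emph{unsupported}. Take $r=3$, $n=4$, and write $C=xyx^{-1}y^{-1}$. For $\beta=\{\alpha_1,\alpha_2,\alpha_3\}$ one has $\prod_{i}\beta_{[i]}=x^{-3}y^{-1}$ and $F\big(Cx\,x^{-3}y^{-1}x^{-1}\big)=Cxy^{-2}(1+y^3)^{-1}xy^{-1}x^{-1}$, which is not a Laurent polynomial at all; likewise for $\beta=\{\alpha(0,1)\}$ one gets $Cxy(1+y^3)^{-1}xy^{-1}x^{-1}$. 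Consequently no map $\pi$ with your claimed fiber property can exist for such $\beta$: the image of a single configuration monomial is not a finite sum of configuration monomials, so the per-configuration matching you propose cannot even be formulated.

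The cancellation you need is genuinely global: it occurs only after summing over all $\beta$ with a fixed \emph{support}. In the example above the two configurations have the same support $V=\{1,2,3\}$, and $Cxy^{-2}(1+y^3)^{-1}xy^{-1}x^{-1}+Cxy(1+y^3)^{-1}xy^{-1}x^{-1}=Cxy^{-2}xy^{-1}x^{-1}$, a single monomial of $z_4$. This grouping is exactly the device of the paper's proof: fix a support $V\subset\{\alpha_1,\dots,\alpha_{c_{n-1}}\}$, apply $F$ to the \emph{sum} of all configuration monomials with support $V$, and show that this equals the sum over those $\gamma\in\widetilde{\mathcal{F}}(\mathcal{D}_{n+1})$ whose colored subpaths are precisely the ones given by $f(V)$, where $f$ is the map of Definition~\ref{def_of_f} (it plays the role of your lift, but acts on supports, not on configurations); the elements of $\widetilde{\mathcal{F}}(\mathcal{D}_{n+1})$ missed by this construction are then exactly those in $\mathcal{T}^{\geq 3}(\mathcal{D}_{n+1})\setminus \mathcal{T}^{\geq 4}(\mathcal{D}_{n+1})$. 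So to repair your argument you must replace the per-configuration contraction $\pi$ by a per-support correspondence; as written, your key step fails and the rest of the proof cannot proceed.
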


\begin{lem}\label{20110411lem2} Let $u\geq 3$ and $n\geq u+2$. Then
$$\aligned 
& F\left(\sum_{\mathbf{\beta}\in\mathcal{T}^{\geq u}(\mathcal{D}_n)\setminus \mathcal{T}^{\geq u+1}(\mathcal{D}_n)}xyx^{-1}y^{-1}x\left(\prod_{i=1}^{c_{n-1}}\beta_{[i]}\right)x^{-1}\right)\\
&=\sum_{\mathbf{\beta}\in\mathcal{T}^{\geq u+1}(\mathcal{D}_{n+1})\setminus \mathcal{T}^{\geq u+2}(\mathcal{D}_{n+1})}xyx^{-1}y^{-1}x\left(\prod_{i=1}^{c_{n}}\beta_{[i]}\right)x^{-1}.\endaligned$$
\end{lem}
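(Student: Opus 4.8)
The plan is to prove the identity by producing a fibered, weight-compatible bijection between the two indexing families, driven by the action of $F=F_r$. Write $\Omega_n(\beta)=xyx^{-1}y^{-1}x\left(\prod_{i=1}^{c_{n-1}}\beta_{[i]}\right)x^{-1}$ for the summand attached to a configuration $\beta$ on $\mathcal{D}_n$. Since $F$ is a ring homomorphism, I would first compute $F(\Omega_n(\beta))$ by substituting $F(x)=xyx^{-1}$ and $F(y)=(1+y^r)x^{-1}$ into every factor and tracking the cancellation: the $x^{-1}\cdot x$ pairs produced between neighbouring images $F(\beta_{[i]})$ telescope, while each image of a $y$ contributes the two summands of $1+y^r$. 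The output of this computation should be that $F(\Omega_n(\beta))$ is itself a sum of Laurent monomials, each again of the wrapped form $\Omega_{n+1}(\gamma)$ for configurations $\gamma$ on $\mathcal{D}_{n+1}$, with the binary $1$-versus-$y^r$ choices in bijection with the inclusion/support data for those edges of $\mathcal{D}_{n+1}$ not already dictated by $\beta$.

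The second step is to identify which $\gamma$ occur, using the self-similar structure of the maximal Dyck paths forced by $c_n=rc_{n-1}-c_{n-2}$. I would set up the substitution carrying the Christoffel word of $\mathcal{D}_n$ to that of $\mathcal{D}_{n+1}$, and check that under it each vertex $v_i$, each comparison of $s_{i,t}$ with $s$, and hence each colored subpath carried over from $\mathcal{D}_n$ is relabelled inside $\mathcal{D}_{n+1}$ with the single index shift $m\mapsto m+1$, $w$ unchanged: a level-determining $(m,w)$-green subpath lifts to an $(m+1,w)$-green subpath, the threshold $i+c_m-wc_{m-1}$ going to the corresponding one built from $c_{m+1}-wc_m$, and the count $c_{m-1}-wc_{m-2}$ of ``preceding edges'' going to $c_m-wc_{m-1}$. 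This is the mechanism by which the level index advances by exactly one, while the new low-level colored subpaths on $\mathcal{D}_{n+1}$ arise from the finer structure exposed by the $F$-expansion.

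Third, I would verify that the resulting lift $\beta\mapsto\{\gamma\}$ gives a bijection between $\mathcal{T}^{\geq u}(\mathcal{D}_n)\setminus\mathcal{T}^{\geq u+1}(\mathcal{D}_n)$ and $\mathcal{T}^{\geq u+1}(\mathcal{D}_{n+1})\setminus\mathcal{T}^{\geq u+2}(\mathcal{D}_{n+1})$ matching weights, i.e.\ that $F(\Omega_n(\beta))=\sum_\gamma\Omega_{n+1}(\gamma)$ and that these fibers partition the target family. Here one must check that the three structural constraints shared by all these families (no common edge; $i\neq k'$ and $i'\neq k$ for distinct colored subpaths) are preserved and reflected, and that the defining feature of $\mathcal{T}^{\geq u}$ --- existence of a bad $(m,w)$-green subpath with $m\geq u$ none of whose $c_{m-1}-wc_{m-2}$ preceding edges is occupied --- is carried under $m\mapsto m+1$ exactly to the same feature with $m\geq u+1$; consequently a configuration whose maximal bad level is exactly $u$ is sent to one whose maximal bad level is exactly $u+1$. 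The hypothesis $n\geq u+2$ is what ensures that the green types $(m+1,w)$ with $m+1\leq n$ actually occur on $\mathcal{D}_{n+1}$, in the spirit of Lemma~\ref{20110516eq2}.

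I expect the main obstacle to be the simultaneous bookkeeping of the algebra and the combinatorics. On the algebraic side, one must verify term by term that the telescoping of the $x^{\pm1}$ and the placement of each $1$-versus-$y^r$ choice inside the ordered product $\prod_i\gamma_{[i]}$ reproduce all eight cases defining $\gamma_{[i]}$ exactly --- in particular the $x^{0}y^{-1}$ and $x^{1}y^{-1}$ weights coming from two consecutive vertical edges inside a colored subpath, and the $x^{-1}y^{-1}$ weight of the first vertical edge of a red subpath, which behave differently under $F$ than the ``default'' edges. On the combinatorial side, the delicate point is the ``none of the preceding edges is occupied'' dichotomy: this is exactly what separates consecutive levels and is also exactly the data that the new $F$-choices can alter, so showing the lift is well defined and invertible precisely here --- rather than accidentally crossing between $\mathcal{T}^{\geq u+1}(\mathcal{D}_{n+1})\setminus\mathcal{T}^{\geq u+2}(\mathcal{D}_{n+1})$ and $\mathcal{T}^{\geq u+2}(\mathcal{D}_{n+1})$ --- is where the shift $c_{m-1}-wc_{m-2}\mapsto c_m-wc_{m-1}$ and the bound $n\geq u+2$ must be used most carefully.
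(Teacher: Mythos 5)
Your starting point is a genuine gap, and it sinks the whole plan. Writing $\Omega_n(\beta)=xyx^{-1}y^{-1}x\bigl(\prod_{i=1}^{c_{n-1}}\beta_{[i]}\bigr)x^{-1}$ as you do, your first step asserts that for each \emph{individual} $\beta$ the image $F(\Omega_n(\beta))$ expands into a sum of wrapped Laurent monomials $\Omega_{n+1}(\gamma)$, and your third step builds a fibered bijection $\beta\mapsto\{\gamma\}$ whose fibers partition the target family. This is false in the non-commutative setting. Since $F(y^{-1})=x(1+y^r)^{-1}$, every factor $\beta_{[i]}$ with $y$-exponent $-1$ (a single vertical edge of $\beta$, the first edge of a red subpath, the last edge of any colored subpath) contributes a factor $(1+y^r)^{-1}$ to $F(\Omega_n(\beta))$, and such a factor generally does not cancel inside the single term: for instance a vertical single edge of $\beta$ followed by an unsupported horizontal edge produces $\cdots xy^{-1}(1+y^r)^{-1}\,xy^{-1}x^{-1}(1+y^r)x^{-1}\cdots$, where the inverse is blocked by $xy^{-1}x^{-1}$, with which $(1+y^r)$ does not commute. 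Elements of $\mathcal{T}^{\geq u}(\mathcal{D}_n)\setminus\mathcal{T}^{\geq u+1}(\mathcal{D}_n)$ do contain such local configurations (besides the obligatory green subpath they may carry vertical single edges and red subpaths elsewhere), so for those $\beta$ the quantity $F(\Omega_n(\beta))$ is not a non-commutative Laurent polynomial at all, hence equals no sum $\sum_\gamma\Omega_{n+1}(\gamma)$. The paper's own proof of Lemma~\ref{20110411lem1} exhibits exactly this phenomenon: for $r=3$, $n=4$, $V=\{1,2,3\}$, the two configurations $\{\alpha_1,\alpha_2,\alpha_3\}$ and $\{\alpha(0,1)\}$ have $F$-images $Cxy\,y^{-3}(1+y^3)^{-1}xy^{-1}x^{-1}$ and $Cxy\,(1+y^3)^{-1}xy^{-1}x^{-1}$ (with $C=xyx^{-1}y^{-1}$), neither a Laurent polynomial; only their \emph{sum} collapses to the single target term $A_4$, attached to $\gamma=\{\alpha(0,3)\}$. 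So the correspondence induced by $F$ is many-to-many between \emph{groups} of configurations (grouped by support on one side, by colored-subpath data on the other), and no term-by-term fiber decomposition of the kind you describe can exist. The later bookkeeping you outline (the shift $m\mapsto m+1$ with $w$ fixed, the preceding-edge dichotomy) is the right intuition, but it is built on top of this non-existent decomposition.

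This is also not the route the paper takes, and the contrast shows what is actually needed. The paper proves the lemma \emph{inside} the induction (\ref{globalinduction}), so that the formula of Lemma~\ref{20110411lem3} is available for $x_j$, $j\le n$. For $u=n-2$ it observes that every $\beta$ in the source set must contain the unique $(n-2,w)$-green subpath $\alpha(wc_{n-3},c_{n-2})$ with all of its preceding edges unoccupied, and that the remaining free segment $[1,(rw-1)c_{n-3}]$ is a concatenation of copies of $\mathcal{D}_{n-1}$, $\mathcal{D}_{n-2}$, $\mathcal{D}_{n-3}$, over which configurations factor (a colored subpath crossing the junction vertices splits into pieces, all blue except possibly the first). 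By the induction hypothesis each factor sums to a conjugate of an earlier $x_j$, so the source sum equals $\sum_{w=1}^{r-2}C(C^{-1}x_{n-2})^{w-1}(C^{-1}x_{n-3})^{r-1}(C^{-1}x_{n-4})^{w-1}C^{-1}$, the target sum equals the same expression with every index raised by one, and the lemma is immediate from $F(C)=C$ and $F(x_j)=x_{j+1}$; no term-by-term analysis of $F$ is ever performed. If you want to salvage a direct combinatorial argument, you must at minimum replace your fibered bijection by an identity between grouped sums (all $\beta$ with fixed support, as in the proof of Lemma~\ref{20110411lem1}), restricted to the $\mathcal{T}$-sets -- in effect a second, harder version of Lemma~\ref{20110411lem1}, not the routine verification your sketch suggests.
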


\begin{lem}\label{20110411lem3} Let $n\geq 4$.  Then
\begin{equation}\label{20110411lem3f}\aligned  x_{n-1}&=z_{n-1}-\sum_{m=5}^n F^{n-m}\left(\sum_{\mathbf{\beta}\in\mathcal{T}^{\geq 3}(\mathcal{D}_m)\setminus \mathcal{T}^{\geq 4}(\mathcal{D}_m)}xyx^{-1}y^{-1}x\left(\prod_{i=1}^{c_{m-1}}\beta_{[i]}\right)x^{-1}\right)\\
&=\sum_{\mathbf{\beta}\in\mathcal{F}(\mathcal{D}_n)}xyx^{-1}y^{-1}x\left(\prod_{i=1}^{c_{n-1}}\beta_{[i]}\right)x^{-1}.
\endaligned\end{equation}
\end{lem}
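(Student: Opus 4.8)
The statement packages two equalities, and the plan is to establish each in turn, using throughout that $F$ is a $k$-algebra automorphism of $K$ and hence additive, so it commutes with the finite sums defining $z_{n-1}$ and the various correction terms. For brevity write
\[
E_m := \sum_{\beta\in\mathcal{T}^{\geq 3}(\mathcal{D}_m)\setminus\mathcal{T}^{\geq 4}(\mathcal{D}_m)} xyx^{-1}y^{-1}x\Big(\prod_{i=1}^{c_{m-1}}\beta_{[i]}\Big)x^{-1},
\]
so that the first displayed line of the lemma reads $x_{n-1}=z_{n-1}-\sum_{m=5}^n F^{n-m}(E_m)$.

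For the first equality I would argue by induction on $n$, using that $r_n=r$ for all $n$ gives $x_{n-1}=F(x_{n-2})$. The base case $n=3$ asserts $x_2=z_2$, which holds by the definition $z_2=x_2$ and an empty sum. For the inductive step ($n\geq 4$), rewrite Lemma~\ref{20110411lem1} applied with index $n-1$ as $z_{n-1}=F(z_{n-2})+E_n$; note that Lemma~\ref{20110516eq2} forces $\mathcal{T}^{\geq 3}(\mathcal{D}_4)=\emptyset$, hence $E_4=0$, which is exactly why the summation range begins at $m=5$. Applying $F$ to the inductive hypothesis and using additivity,
\[
x_{n-1}=F(x_{n-2})=F(z_{n-2})-\sum_{m=5}^{n-1}F^{n-m}(E_m),
\]
and substituting $F(z_{n-2})=z_{n-1}-E_n$ together with $E_n=F^{0}(E_n)$ collapses the two correction terms into $-\sum_{m=5}^n F^{n-m}(E_m)$, as required.

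For the second equality, I would first use \eqref{20110516eq}, namely $\mathcal{F}(\mathcal{D}_n)=\widetilde{\mathcal{F}}(\mathcal{D}_n)\setminus\mathcal{T}^{\geq 3}(\mathcal{D}_n)$, together with $\mathcal{T}^{\geq 3}(\mathcal{D}_n)\subseteq\widetilde{\mathcal{F}}(\mathcal{D}_n)$: since $z_{n-1}$ is the sum over $\widetilde{\mathcal{F}}(\mathcal{D}_n)$, the second equality is equivalent to
\[
\sum_{\beta\in\mathcal{T}^{\geq 3}(\mathcal{D}_n)} xyx^{-1}y^{-1}x\Big(\prod_{i=1}^{c_{n-1}}\beta_{[i]}\Big)x^{-1}=\sum_{m=5}^n F^{n-m}(E_m).
\]
To prove this I would stratify the left-hand side. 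Because the green-witness condition defining $\mathcal{T}^{\geq u}$ becomes more restrictive as $u$ grows, the sets nest, $\mathcal{T}^{\geq u+1}\subseteq\mathcal{T}^{\geq u}$, and Lemma~\ref{20110516eq2} gives $\mathcal{T}^{\geq n-1}(\mathcal{D}_n)=\emptyset$. Hence $\mathcal{T}^{\geq 3}(\mathcal{D}_n)=\bigsqcup_{u=3}^{n-2}\big(\mathcal{T}^{\geq u}(\mathcal{D}_n)\setminus\mathcal{T}^{\geq u+1}(\mathcal{D}_n)\big)$. Writing $G_u^{(n)}$ for the corresponding partial sum (with product up to $c_{n-1}$, so that $G_3^{(m)}=E_m$), I would apply Lemma~\ref{20110411lem2} repeatedly in the form $F(G_{u-1}^{(n-1)})=G_u^{(n)}$; its hypotheses $u-1\geq 3$ and $n-1\geq(u-1)+2$ hold throughout the range $3\leq u\leq n-2$ precisely because that range is $n\geq u+2$. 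Iterating down to level $3$ yields $G_u^{(n)}=F^{u-3}(E_{n-u+3})$, and the substitution $m=n-u+3$ (so $m$ runs from $n$ down to $5$ and $u-3=n-m$) converts $\sum_{u=3}^{n-2}G_u^{(n)}$ into $\sum_{m=5}^n F^{n-m}(E_m)$.

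Granting the three preceding lemmas, the present argument is essentially bookkeeping, and the only points demanding care are that $F$ distributes over the sums (true since $F$ is an automorphism) and that all index ranges align: that the stratification of $\mathcal{T}^{\geq 3}(\mathcal{D}_n)$ terminates at level $n-2$ via Lemma~\ref{20110516eq2}, and that every invocation of Lemma~\ref{20110411lem2} in the telescoping respects its hypothesis $n\geq u+2$. The genuine difficulty of Theorem~\ref{mainthm} is therefore displaced entirely into Lemmas~\ref{20110411lem1} and~\ref{20110411lem2}, where the combinatorial action of $F$ on individual colored-subpath configurations is computed; this lemma merely assembles those computations, and its second equality is exactly the asserted formula for $x_{n-1}$ in Theorem~\ref{mainthm}.
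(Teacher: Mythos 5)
Your proposal is correct and takes essentially the same route as the paper: an induction driven by $x_{n-1}=F(x_{n-2})$ together with Lemma~\ref{20110411lem1} to generate the new correction term, iterated applications of Lemma~\ref{20110411lem2} to identify each $F^{n-m}(E_m)$ (in your notation) with a stratum $\mathcal{T}^{\geq u}(\mathcal{D}_n)\setminus\mathcal{T}^{\geq u+1}(\mathcal{D}_n)$, and then Lemma~\ref{20110516eq2} together with \eqref{20110516eq} to collapse the telescoping union onto $\mathcal{T}^{\geq 3}(\mathcal{D}_n)$ and pass from $\widetilde{\mathcal{F}}(\mathcal{D}_n)$ to $\mathcal{F}(\mathcal{D}_n)$. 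The only difference is organizational --- the paper derives both displayed equalities simultaneously inside a single inductive chain, whereas you prove the first equality by induction and then obtain the second by a direct telescoping computation --- but the underlying steps and lemma invocations coincide.
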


The proof of Lemma~\ref{20110411lem1} will be independent of those of Lemmas~\ref{20110411lem2} and \ref{20110411lem3}. We prove Lemmas~\ref{20110411lem2} and \ref{20110411lem3} by the following induction: 
\begin{equation}\label{globalinduction}
\aligned&[\text{Lemma~\ref{20110411lem2} holds true for }n\leq d] \Longrightarrow \text{[Lemma~\ref{20110411lem3} holds true for }n\leq d+1]\\
& \Longrightarrow [\text{Lemma~\ref{20110411lem2} holds true for }n\leq d+1] \Longrightarrow [\text{Lemma~\ref{20110411lem3} holds true for }n\leq d+2] \cdots.
\endaligned\end{equation}

\begin{proof}[Proof of Lemma~\ref{20110411lem3}]
This proof is a straightforward adaptation of the proof of Lemma 19 in \cite{LS}.
We use induction on $n$. It is easy to show that $x_3=z_3$. Assume that (\ref{20110411lem3f}) holds for $n$.

Then $$\aligned 
x_{n}& =F(x_{n-1})\\
&\overset{F : homomorphism}= F(z_{n-1})-\sum_{m=5}^n F^{n-m+1}\left(\sum_{\mathbf{\beta}\in\mathcal{T}^{\geq 3}(\mathcal{D}_m)\setminus \mathcal{T}^{\geq 4}(\mathcal{D}_m)}xyx^{-1}y^{-1}x\left(\prod_{i=1}^{c_{m-1}}\beta_{[i]}\right)x^{-1}\right)\\
&\overset{Lemma~\ref{20110411lem1}}= z_{n}-\sum_{m=5}^{n+1} F^{n-m+1}\left(\sum_{\mathbf{\beta}\in\mathcal{T}^{\geq 3}(\mathcal{D}_m)\setminus \mathcal{T}^{\geq 4}(\mathcal{D}_m)}xyx^{-1}y^{-1}x\left(\prod_{i=1}^{c_{m-1}}\beta_{[i]}\right)x^{-1}\right)\\
&\overset{Lemma~\ref{20110411lem2}}= z_{n}-\sum_{m=5}^{n+1} \sum_{\mathbf{\beta}\in\mathcal{T}^{\geq n-m+4}(\mathcal{D}_{n+1})\setminus \mathcal{T}^{\geq n-m+5}(\mathcal{D}_{n+1})}xyx^{-1}y^{-1}x\left(\prod_{i=1}^{c_{n}}\beta_{[i]}\right)x^{-1}\\
&= z_{n}-\sum_{\mathbf{\beta}\in\mathcal{T}^{\geq 3}(\mathcal{D}_{n+1})\setminus \mathcal{T}^{\geq n}(\mathcal{D}_{n+1})}xyx^{-1}y^{-1}x\left(\prod_{i=1}^{c_{n}}\beta_{[i]}\right)x^{-1}\\
&\overset{Lemma~\ref{20110516eq2}}= z_{n}-\sum_{\mathbf{\beta}\in\mathcal{T}^{\geq 3}(\mathcal{D}_{n+1})}xyx^{-1}y^{-1}x\left(\prod_{i=1}^{c_{n}}\beta_{[i]}\right)x^{-1}\\
&\overset{(\ref{20110411z})}= \sum_{\mathbf{\beta}\in\widetilde{\mathcal{F}}(\mathcal{D}_{n+1})\setminus\mathcal{T}^{\geq 3}(\mathcal{D}_{n+1})}xyx^{-1}y^{-1}x\left(\prod_{i=1}^{c_{n}}\beta_{[i]}\right)x^{-1}\\
&\overset{(\ref{20110516eq})}=\sum_{\mathbf{\beta}\in{\mathcal{F}}(\mathcal{D}_{n+1})}xyx^{-1}y^{-1}x\left(\prod_{i=1}^{c_{n}}\beta_{[i]}\right)x^{-1}.\endaligned$$
\end{proof}

In order to prove Lemma~\ref{20110411lem1}, we need the following notation. 

\begin{defn}
The sequence $\{b_{i,j}\}_{i\in \mathbb{Z}_{\geq 2}, 1\leq j\leq c_i}$ is defined by:
$$b_{i,j}=\left\{\begin{array}{ll} r, &\text{ if }\alpha_j\text{ is a horizontal edge of }\mathcal{D}_{i+1} \\
r-1,&\text{ if }\alpha_j\text{ is a vertical edge of }\mathcal{D}_{i+1}.    \end{array}   \right.$$
\qed\end{defn}

For integers $i\leq j$, we denote the set $\{i,i+1,i+2,\cdots,j\}$ by $[i,j]$. We will always identify $[i,j]$ with the subpath given by $(\alpha_i,\alpha_{i+1},\cdots,\alpha_j)$. 

\begin{defn}\label{def_of_f}
We will need a function $f$ from $\{\text{subsets of } [1,c_{n-1}]\}$ to $\{\text{subsets of } [1,c_{n}]\}$. For each subset $V\subset [1,c_{n-1}]$, we define $f(V)$ as follows.

If $V=\emptyset$ then $f(\emptyset)=\emptyset$. If $V\neq\emptyset$ then we write $V$ as a disjoint union of maximal connected subsets  $ V=\sqcup_{i=1}^j [e_i,e_i+\ell_i-1]$ with $\ell_i>0$ $(1\leq i\leq j)$ and $e_i+\ell_i<e_{i+1}$ $(1\leq i\leq j-1)$. For each $1\leq i\leq j$, let $$W_i=[1+\sum_{k=1}^{e_i-1}b_{n-1,k},\, \sum_{k=1}^{e_i+\ell_i-1}b_{n-1,k}]$$ and define $f_i(V)$ by
$$
f_i(V):=\left\{ \begin{array}{ll}  
W_i,  &\begin{array}{l}\text{if the subpath given by }W_i\text{ is blue or green;} \end{array}\\
 \text{ } & \text{ }\\
\{\sum_{k=1}^{e_i-1}b_{n-1,k}\}\cup W_i, & \text{ otherwise.} \end{array}  \right.
$$
Then $f(V)$ is obtained by taking the union of $f_i(V)$'s:
$$
f(V):=\cup_{i=1}^j f_i(V).
$$Note that the subpath given by $f_i(V)$ is always one of blue, green, or red subpaths, and that every blue, green, or red subpath can be realized as the image of a maximal connected interval under $f$.
\qed\end{defn}

\begin{exmp}
Let $r=3$ and $n=4$. Then $f(\{1,2,3\})=\{1,2,3,4,5,6,7,8\}.$ As illustrated below, the image of the subpath $(\alpha_1,\alpha_2,\alpha_3)$ under $f$ is the subpath $(\alpha_1,\cdots,\alpha_{8}),$ which is blue.

$$\begin{picture}(80,80)
\boxs{0,60}\boxs{20,60} \linethickness{1pt}\put(0,60){\line(2,1){40}}\linethickness{3pt}\put(0,60){\line(1,0){40}}
\linethickness{3pt}\put(40,60){\line(0,1){20}}
\put(55,60){$\Huge{\overset{f}\mapsto}$}\end{picture}
\begin{picture}(140,80)
\boxs{0,60}\boxs{20,60}\boxs{40,60}\boxs{60,60}\boxs{80,60}
\boxs{0,80}\boxs{20,80}\boxs{40,80}\boxs{60,80}\boxs{80,80}
\boxs{0,40}\boxs{20,40}\boxs{40,40}\boxs{60,40}\boxs{80,40}
\linethickness{1pt}\put(0,40){\line(5,3){100}}
\linethickness{3pt}\color{blue}\put(0,40){\line(1,0){40}}
\linethickness{3pt}\put(40,40){\line(0,1){20}}
\linethickness{3pt}\put(40,60){\line(1,0){40}}
\linethickness{3pt}\put(80,60){\line(0,1){20}}
\linethickness{3pt}\put(80,80){\line(1,0){20}}
\linethickness{3pt}\put(100,80){\line(0,1){20}}
\end{picture} $$
\end{exmp}

\begin{lem}
Let $C=xyx^{-1}y^{-1}$. Then $F(C)=C$.
\end{lem}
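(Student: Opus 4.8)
The plan is to prove this by a direct computation, using that $F=F_r$ is a ring automorphism of $K$ and therefore commutes with taking inverses. First I would expand $F(C)=F(x)\,F(y)\,F(x)^{-1}\,F(y)^{-1}$ and substitute the defining formulas (\ref{Kont_map}). Since $F(x)=xyx^{-1}$ we have $F(x)^{-1}=xy^{-1}x^{-1}$, and since $F(y)=(1+y^r)x^{-1}$ we have $F(y)^{-1}=x(1+y^r)^{-1}$. Thus
$$F(C)=\bigl(xyx^{-1}\bigr)\bigl((1+y^r)x^{-1}\bigr)\bigl(xy^{-1}x^{-1}\bigr)\bigl(x(1+y^r)^{-1}\bigr).$$

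Next I would write the right-hand side as a single word in $x,y$ and cancel the two adjacent pairs $x^{-1}x$ (one arising between the second and third factors, one between the third and fourth). This collapses the expression to
$$F(C)=x\,y\,x^{-1}\,(1+y^r)\,y^{-1}\,(1+y^r)^{-1}.$$
The final step is the only place where any structure is used: the three rightmost factors $(1+y^r)$, $y^{-1}$, and $(1+y^r)^{-1}$ are all elements of the subfield generated by $y$ alone, hence commute with one another, so $(1+y^r)\,y^{-1}\,(1+y^r)^{-1}=y^{-1}$. Substituting this back yields $F(C)=xyx^{-1}y^{-1}=C$, as claimed.

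There is no real obstacle here; the computation is short and self-contained. The one point that must be stated carefully is the commutativity of $(1+y^r)^{-1}$ with $y^{-1}$, which holds because both lie in the (commutative) subring generated by $y$ inside the skew field $K$; everything else is bookkeeping of $x^{-1}x$ cancellations. The lemma is exactly the statement that $C=xyx^{-1}y^{-1}$ is a fixed point of $F_r$, which is what allows the prefactor $xyx^{-1}y^{-1}x$ in the main formula (\ref{maineq1}) to be transported cleanly through repeated applications of $F$ in the inductive proof of Lemma~\ref{20110411lem3}.
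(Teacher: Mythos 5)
Your proof is correct and is essentially the paper's own argument: the paper performs the identical one-line computation $F(xyx^{-1}y^{-1})=xyx^{-1}(1+y^r)x^{-1}xy^{-1}x^{-1}x(1+y^r)^{-1}=xyx^{-1}y^{-1}$, relying silently on the same $x^{-1}x$ cancellations and on the commutation of $y^{-1}$ with $(1+y^r)^{-1}$ that you spell out explicitly. Your write-up merely makes these bookkeeping steps, and the reason the final commutation is legitimate, more explicit.
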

\begin{proof}
$$F(xyx^{-1}y^{-1})=xyx^{-1}(1+y^r)x^{-1}xy^{-1}x^{-1}x(1+y^r)^{-1}=xyx^{-1}y^{-1}.$$
\end{proof} 

\begin{proof}[Proof of Lemma~\ref{20110411lem1}]
The idea is the same as in the commutative case  \cite[Lemma 17]{LS}, that is, we choose any subset, say $V$, of $\{\alpha_1,\cdots,\alpha_{c_{n-1}}\}$ and consider all $\beta$ whose support is $V$. Then one can check that  $$\aligned 
& F\left(\sum_{\begin{array}{c}\beta\in {\mathcal{F}}(\mathcal{D}_{n}),\\\beta : \text{supp}(\beta)=V\end{array} } Cx\left(\prod_{i=1}^{c_{n-1}}\beta_{[i]}\right)x^{-1}\right)\\
& =\sum_{\begin{array}{c}\beta\in \widetilde{\mathcal{F}}(\mathcal{D}_{n+1}),\\ \beta: \text{colored subpaths of }\beta\text{ are precisely the ones given by }f(V)\end{array}} Cx\left(\prod_{i=1}^{c_{n}}\beta_{[i]}\right)x^{-1} \endaligned$$  Then, as in the commutative case, $\beta\in \widetilde{\mathcal{F}}(\mathcal{D}_{n+1})$ which are not covered by this construction belong to $\mathcal{T}^{\geq 3}(\mathcal{D}_{n+1})\setminus \mathcal{T}^{\geq 4}(\mathcal{D}_{n+1})$.

For example, if $r=3$, $n=4$, and $V=\emptyset$, then $\beta $ must be empty, and we get $$\prod_{i=1}^{3}\beta_{[i]} = x^{-1}y^3 x^{-1}y^3 x^{-1}y^2.$$ It is straightforward to show that 
$$\aligned
&F\left(Cx\left(x^{-1}y^3 x^{-1}y^3 x^{-1}y^2\right)x^{-1}\right)\\
&=\sum_{\beta\subset\{\alpha_1,\cdots,\alpha_{8}\}  } Cx\left(\prod_{i=1}^{8}\beta_{[i]}\right)x^{-1}\\
&=A_1,
\endaligned$$
where $A_1$ is the same one as defined in Example~\ref{mainexmp}.

If $r=3$, $n=4$, and  $V=\{1,2,3\}$, then $\beta $ is either $\alpha(0,1)$ or $\{\alpha_1,\alpha_2,\alpha_3\}$, and  we get $\prod_{i=1}^{3}\beta_{[i]} = x^{-1}y^0 x^{-1}y^0 x^{-1}y^{-1} $
 or $\ \prod_{i=1}^{3}\beta_{[i]} = x^{0}y^0 x^{0}y^0 x^{0}y^{-1}$. Then
$$\aligned
&F\left(Cx\left(x^{-1}y^0 x^{-1}y^0 x^{-1}y^{-1} + x^{0}y^0 x^{0}y^0 x^{0}y^{-1}\right)x^{-1}\right)\\
&=Cxyx^{-1}\left(xy^{-3}x^{-1} x (1+y^3)^{-1} + x (1+y^3)^{-1}\right)xy^{-1}x^{-1}\\
&=Cxy\left(y^{-3}(1+y^3)^{-1} +  (1+y^3)^{-1}\right)xy^{-1}x^{-1}\\
&=Cxy\left(y^{-3}(1+y^3)^{-1} +y^{-3}y^3  (1+y^3)^{-1}\right)xy^{-1}x^{-1}\\
&=Cxy\left(y^{-3}(1+y^3)(1+y^3)^{-1} \right)xy^{-1}x^{-1}\\
&=Cxy\left(y^{-3} \right)xy^{-1}x^{-1}\\
&=\sum_{\beta=\{\alpha(0,3)\}  } Cx\left(\prod_{i=1}^{c_{n}}\beta_{[i]}\right)x^{-1}\\
&=A_4,
\endaligned$$
where $A_4$ is the same one as defined in Example~\ref{mainexmp}.

 \end{proof}

It remains to prove Lemma~\ref{20110411lem2}.

\begin{proof}[Sketch of Proof of Lemma~\ref{20110411lem2}]
This proof is a straightforward adaptation of the proof of Lemma 18 in \cite{LS}.

Here we will deal only with the case of $n=u+2$. The case of $n>u+2$ makes use of the same argument. As we use the induction (\ref{globalinduction}), we can assume that $$x_j=\sum_{\mathbf{\beta}\in\mathcal{F}(\mathcal{D}_i)}xyx^{-1}y^{-1}x\left(\prod_{i=1}^{c_{j-1}}\beta_{[i]}\right)x^{-1}$$for $j\leq n$. 

Since it is straightforward to check the statement for $n=5$, we assume that $n\geq 6$. For any $w\in [1,r-2]$, it is easy to show that the lattice point
$(w(c_{n-2}-c_{n-3}), wc_{n-3})$ is below the diagonal from $(0,0)$ to $(c_{n-1}-c_{n-2},c_{n-2})$ and that the points $(w(c_{n-2}-c_{n-3}), 1+wc_{n-3})$ and $(w(c_{n-2}-c_{n-3})-1, wc_{n-3})$ are above the diagonal. So $(w(c_{n-2}-c_{n-3}), wc_{n-3})$ is one of the vertices $v_i$ on $\mathcal{D}_n$. Actually $v_{wc_{n-3}}=(w(c_{n-2}-c_{n-3}), wc_{n-3})$. Since $u=n-2$ and $\alpha(wc_{n-3},c_{n-2})$ is the only $(n-2,w)$-green subpath in $\{\alpha(i,k)\,|\, 0 \leq i < k \leq c_{n-2}\}$, every $\mathbf{\beta}\in\mathcal{T}^{\geq u}(\mathcal{D}_n)\setminus \mathcal{T}^{\geq u+1}(\mathcal{D}_n)$ must contain the green subpath from $v_{wc_{n-3}}$ to $v_{c_{n-2}}$. Then none of the 
$c_{n-3}-wc_{n-4}$ preceding edges of $v_{wc_{n-3}}$ is contained in any element $\beta_{j'}$ of $\beta$. The green subpath from $v_{wc_{n-3}}$ to $v_{c_{n-2}}$ corresponds to the interval $[wc_{n-2}+1,c_{n-1}] \subset [1,c_{n-1}]$. The 
$c_{n-3}-wc_{n-4}$ preceding edges of $v_{wc_{n-3}}$ are $\alpha_{(rw-1)c_{n-3}+1},\cdots, \alpha_{wc_{n-2}}$.

Thus we have
$$\aligned 
&\sum_{\mathbf{\beta}\in\mathcal{T}^{\geq u}(\mathcal{D}_n)\setminus \mathcal{T}^{\geq u+1}(\mathcal{D}_n)}xyx^{-1}y^{-1}x\left(\prod_{i=1}^{c_{n-1}}\beta_{[i]}\right)x^{-1}\\
&=\sum_{w=1}^{r-2} \sum_{V\subset [1,(rw-1)c_{n-3}]}\,\,\sum_{\mathbf{\beta}: \cup\beta_i=V\cup[wc_{n-2}+1,c_{n-1}],\,\mathbf{\beta}\ni \alpha(wc_{n-3},c_{n-2})}xyx^{-1}y^{-1}x\left(\prod_{i=1}^{c_{n-1}}\beta_{[i]}\right)x^{-1}.\,\,\,\,\,\,\,\,(*)\endaligned$$

We observe that the subpath corresponding to $[1,(rw-1)c_{n-3}]$ consists of $(w-1)$ copies of $\mathcal{D}_{n-1}$, $(r-1)$ copies of $\mathcal{D}_{n-2}$, and $(w-1)$ copies of $\mathcal{D}_{n-3}$. Let $v_{j_0}=(0,0)$ and $v_{j_i}$ be the end point of each of these copies, i.e., $$\aligned &v_{j_i}=v_{ic_{n-3}}\text{ for }1\leq i\leq w-1,\\
& v_{j_{w-1+i}}=v_{(w-1)c_{n-3}+ic_{n-4}}\text{ for }1\leq i\leq r-1,\\
& v_{j_{w+r-2+i}}=v_{(w-1)c_{n-3}+(r-1)c_{n-4}+ic_{n-5}}\text{ for }1\leq i\leq w-1.\endaligned$$
If a $(m,w')$-green (resp. blue or red) subpath, say $\alpha(i,k)$, in $[1,(rw-1)c_{n-3}]$ passes through $v_{j_e},v_{j_{e+1}},\cdots, v_{j_{e+\ell}}$, then $\alpha(i,k)$ can be naturally decomposed into $\alpha(i,j_e)$, $\alpha(j_e,j_{e+1})$, $\cdots$,  $\alpha(j_{e+\ell},k)$. It is not hard to show that $\alpha(i,j_e)$ is also $(m,w')$-green (resp. blue or red) and that 
 $\alpha(j_e,j_{e+1})$, $\cdots$,  $\alpha(j_{e+\ell},k)$ are all blue.

Hence 
$$\aligned
&(*)=\sum_{w=1}^{r-2} xyx^{-1}y^{-1}x\left(\sum_{\mathbf{\beta}\in\mathcal{F}(\mathcal{D}_{n-1})}\left(\prod_{i=1}^{c_{n-2}}\beta_{[i]}\right)\right)^{w-1}\left(\sum_{\mathbf{\beta}\in\mathcal{F}(\mathcal{D}_{n-2})}\left(\prod_{i=1}^{c_{n-3}}\beta_{[i]}\right)\right)^{r-1}\\
&\,\,\,\,\,\,\,\,\,\,\,\,\,\,\,\,\times\left(\sum_{\mathbf{\beta}\in\mathcal{F}(\mathcal{D}_{n-3})}\left(\prod_{i=1}^{c_{n-4}}\beta_{[i]}\right)\right)^{w-1}x^{-1}yxy^{-1} x^{-1}\\
&=\sum_{w=1}^{r-2} xyx^{-1}y^{-1}x\left(x^{-1}yxy^{-1}x^{-1}x_{n-2}x\right)^{w-1}\left(x^{-1}yxy^{-1}x^{-1}x_{n-3}x\right)^{r-1}\\
&\,\,\,\,\,\,\,\,\,\,\,\,\,\,\,\,\times\left(x^{-1}yxy^{-1}x^{-1}x_{n-4}x\right)^{w-1}x^{-1}yxy^{-1}x^{-1}\\
&=\sum_{w=1}^{r-2} C(C^{-1}x_{n-2})^{w-1}(C^{-1}x_{n-3})^{r-1}(C^{-1}x_{n-4})^{w-1}C^{-1}.
\endaligned$$For the same reason, we get $$\sum_{\mathbf{\beta}\in\mathcal{T}^{\geq u+1}(\mathcal{D}_{n+1})\setminus \mathcal{T}^{\geq u+2}(\mathcal{D}_{n+1})}Cx\left(\prod_{i=1}^{c_{n}}\beta_{[i]}\right)x^{-1}=\sum_{w=1}^{r-2} C(C^{-1}x_{n-1})^{w-1}(C^{-1}x_{n-2})^{r-1}(C^{-1}x_{n-3})^{w-1}C^{-1}.$$
Since $F(C)=C$, we have
$$\aligned 
& F\left(\sum_{\mathbf{\beta}\in\mathcal{T}^{\geq u}(\mathcal{D}_n)\setminus \mathcal{T}^{\geq u+1}(\mathcal{D}_n)}xyx^{-1}y^{-1}x\left(\prod_{i=1}^{c_{n-1}}\beta_{[i]}\right)x^{-1}\right)\\
&=\sum_{\mathbf{\beta}\in\mathcal{T}^{\geq u+1}(\mathcal{D}_{n+1})\setminus \mathcal{T}^{\geq u+2}(\mathcal{D}_{n+1})}xyx^{-1}y^{-1}x\left(\prod_{i=1}^{c_{n}}\beta_{[i]}\right)x^{-1}.\endaligned$$
 \end{proof}

\end{document}